\documentclass[a4paper, 12pt,reqno]{amsart}

\usepackage{amsmath}
\usepackage{amssymb}
\usepackage{hyperref} 
\usepackage{amsfonts}
\usepackage{mathtools}
\usepackage{theoremref}
\usepackage{amsthm}
\usepackage[dvipsnames]{xcolor}
\usepackage[utf8]{inputenc}
\usepackage{array}
\usepackage[inline]{enumitem}
\usepackage[english]{babel}
\usepackage{comment}
\usepackage{multicol}

\usepackage{tikz}
\usetikzlibrary{shapes.geometric}
\usetikzlibrary {arrows.meta}

\usetikzlibrary{hobby}
\usetikzlibrary{positioning}

\calclayout

\setlist{topsep=0mm,partopsep=0mm,itemsep=1mm}

\theoremstyle{plain}
\newtheorem{thm}{Theorem}[section]

\newtheorem{lemma}[thm]{Lemma}

\theoremstyle{definition}

\newtheorem{remark}[thm]{Remark}

\newtheorem*{rep@theorem}{\rep@title}
\newcommand{\newreptheorem}[2]{%
\newenvironment{rep#1}[1]{%
 \def\rep@title{#2 ##1}%
 \begin{rep@theorem}}%
 {\end{rep@theorem}}}
\makeatother

\newreptheorem{theorem}{Theorem}
\newreptheorem{corollary}{Corollary}

\theoremstyle{remark}

\newtheorem{cclaim}{Claim}[thm]

\newcommand{\N}{\mathbb{N}}
\newcommand{\Z}{\mathbb{Z}}
\newcommand{\Q}{\mathbb{Q}}

\newcommand{\inlineitem}[1][]{%
\ifnum\enit@type=\tw@
    {\descriptionlabel{#1}}
  \hspace{\labelsep}%
\else
  \ifnum\enit@type=\z@
       \refstepcounter{\@listctr}\fi
    \quad\@itemlabel\hspace{\labelsep}%
\fi}

\newcommand{\RM}{\mathcal{M}[G;I,\Lambda;P]}
\newcommand{\Cong}{\text{Cong}}
\newcommand{\Diag}{\text{Diag}}
\newcommand{\normspace}[1]{\, #1 \,}
\newcommand{\bigspace}[1]{\hspace{4.5pt} #1 \hspace{4.5pt}}

\title[Numbers of Diagonal subsemigroups and congruences]{Comparing numbers of diagonal subsemigroups and congruences for semigroups}

\author[C.\ Barber]{Callum Barber}
\address{School of Mathematics and Statistics, University of St Andrews, St Andrews, Fife KY16 9SS, UK.}
\email{cjb38@st-andrews.ac.uk}

\author[N.\ Ru\v{s}kuc]{Nik Ru\v{s}kuc}
\address{School of Mathematics and Statistics, University of St Andrews, St Andrews, Fife KY16 9SS, UK.}
\email{nik.ruskuc@st-andrews.ac.uk}

\keywords{Semigroup, congruence, subsemigroup, Rees matrix semigroup, Clifford semigroup}

\subjclass{08A30, 20M10, 05E16}

\begin{document}

\begin{abstract}
    Given a semigroup $S$, a diagonal subsemigroup $\rho$ is defined to be a reflexive and compatible relation on $S$, i.e. a subsemigroup of the direct square $S\times S$ containing the diagonal $\{ (s,s)\colon s\in S\}$.
    When $S$ is finite, we define the DSC coefficient $\chi(S)$ to be the ratio of the number of congruences to the number of diagonal subsemigroups.
    In a previous work we observed that $\chi(S) = 1$ if and only if $S$ is a group.
    Here we show that for any rational $\alpha$ with $0 < \alpha \leq 1$, there exists a semigroup with $\chi(S) = \alpha$.
    We do this by utilizing the Rees matrix construction and adapting the congruence classification of such semigroups to describe their diagonal subsemigroups.
\end{abstract}

\maketitle

\section{Introduction}

One of the most fundamental concepts in algebra is that of a congruence.
Given a semigroup $S$ a \textit{congruence} on $S$ is an equivalence relation $\rho$ that is compatible with the multiplication, i.e. $(x,y),(z,t) \in \rho \implies (xz,yt) \in \rho$.
It is easy to see that every congruence is a subsemigroup of the direct square $S \times S$.
A more general type of subsemigroup of $S\times S$ is a \textit{diagonal subsemigroup}, which is a reflexive relation that is compatible with the multiplication of the semigroup.
Our previous paper on this topic \cite{BARBER_2025} explored semigroups with the property that every diagonal subsemigroup is a congruence.
We call such a semigroup \textit{DSC}.
The starting point in that paper is the following foundational observation:
\begin{thm}
    \label{motivation theorem}
    Let $S$ be a finite semigroup. Then $S$ is DSC  if and only if $S$ is a group.
\end{thm}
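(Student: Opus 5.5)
We prove the two directions separately; the reverse direction is elementary and the forward direction rests on a single well-chosen diagonal subsemigroup.

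\emph{Groups are DSC.} Suppose $S$ is a group and $\rho$ is a diagonal subsemigroup. Reflexivity holds by definition, so we need symmetry and transitivity.

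For symmetry, take $(a,b)\in\rho$. Since $S$ is finite, the element $(a,b)$ of the finite subsemigroup $\rho\subseteq S\times S$ has some power equal to an idempotent. The only idempotent of $S\times S$ is $(e,e)$. Hence $(a^{-1},b^{-1})$ is a positive power of $(a,b)$: take $(a,b)^{n-1}$, where $n$ is a common order of $a$ and $b$ in $S$, using $(a,b)$ itself if $n=1$. So $(a^{-1},b^{-1})\in\rho$. Using reflexivity, we then get
\[
(b,b)(a^{-1},b^{-1})(a,a)=(ba^{-1}a,\,bb^{-1}a)=(b,a)\in\rho.
\]
Finiteness is used here essentially, to get inverses inside $\rho$.

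For transitivity, take $(a,b),(b,c)\in\rho$. We have $(a,b)(b^{-1},b^{-1})(b,c)=(ab^{-1}b,\,bb^{-1}c)=(a,c)$, and all three factors lie in $\rho$. So $\rho$ is transitive, and hence $\rho$ is a congruence.

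\emph{DSC implies group.} Suppose $S$ is finite and every diagonal subsemigroup is a congruence. The natural candidate is the relation
\[
\rho=\Delta\cup\{(xs,x)\colon x\in S,\ s\in S\}.
\]
Equivalently, we can use $\Delta\cup (S\times S)(s,t)(S\times S)$-type constructions. The cleanest choice is $\rho_1=\Delta\cup\{(a,b): a\in bS\}$, i.e.\ $a\le_{\mathcal R} b$ in the strict principal sense.

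This relation is compatible. If $a=bs$ and $c=dt$, then $ac=bsdt\in bdS$ unless we need commutation, which fails. So we instead use $\rho=\{(a,b): a\in S^1 b S^1\}$? That is not obviously compatible either.

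A simpler approach is $\rho=\{(a,b)\colon a=b \text{ or } a\in SbS\cup Sb\cup bS\}$, the ideal-order relation $a\le_{\mathcal J} b$. Compatibility again needs care.

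Therefore the plan is to use one-sided relations: $\rho_L=\{(a,b): a\in S^1b\}$ is compatible with \emph{left} multiplication but not products. The approach I would commit to is $\rho=\Delta\cup (S\times I)$, or $\rho=\Delta\cup\{(a,b)\colon a\in I\}$ with $I$ an ideal. This relation is a subsemigroup: a product of two pairs either lies in $\Delta$, or has first coordinate in $I$ since $I$ is an ideal. It is reflexive. Symmetry forces: for $a\in I$ and any $b$, $(b,a)\in\rho$, so $b\in I$ or $b=a$. Hence either $I=S$ or $|S|=1$. Taking $I=SaS$ for arbitrary $a$ (nonempty ideal) gives $SaS=S$ for all $a$, so $S$ is simple. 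Being finite, $S$ is then completely simple, $S\cong\RM$ by Rees' theorem.

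To force $S$ to be a group, it remains to rule out several $\mathcal L$- or $\mathcal R$-classes. Use the relation $\rho=\{(a,b): a\,\mathcal R\, b\text{-type ordering}\}$; more concretely, use $\rho=\Delta\cup\{(a,b)\colon a\in Sb\}$? In a completely simple semigroup $Sb$ is the $\mathcal L$-class of $b$, so this is just $\mathcal L$, a congruence. Instead take $\rho=\{((i,g,\lambda),(j,h,\mu))\colon i\le j\}$ for a fixed linear order on $I$. It is reflexive and compatible, since the product's $I$-coordinate is that of the left factor. If $|I|\ge 2$ it is not symmetric. Hence $|I|=1$, and dually $|\Lambda|=1$, so $S\cong G$ is a group.

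The main obstacle was finding a compatible non-symmetric relation. The ideal relation and the order-on-indices relation handle this cleanly.
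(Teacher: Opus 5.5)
Your proof is correct. The paper does not prove this theorem; it quotes it from the authors' earlier work. So there is no in-paper argument to match, and your route is complete and self-contained.

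For groups, periodicity gives $(a^{-1},b^{-1})=(a,b)^{n-1}\in\rho$. Then $(b,b)(a^{-1},b^{-1})(a,a)=(b,a)$ gives symmetry, and $(a,b)(b^{-1},b^{-1})(b,c)=(a,c)$ gives transitivity.

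Conversely, $\Delta\cup(J\times S)$ is a diagonal subsemigroup for every ideal $J$, and its symmetry forces $J=S$. So $S$ is simple, hence (being finite) completely simple. A linear order on $I$ (dually on $\Lambda$) then gives a reflexive compatible relation that is not symmetric unless $|I|=1$ (resp.\ $|\Lambda|=1$).

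This last relation is exactly $\rho_{G,\le,\Lambda\times\Lambda}$ in the paper's notation. It is a diagonal subsemigroup by Lemma~\ref{linked gives diagsubsemi}, because every triple with $N=G$ is linked. Equivalently, it is the preimage of a non-symmetric diagonal subsemigroup of the rectangular band $I\times\Lambda$, which is a homomorphic image of $S$.

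Your argument also proves more than stated. The group direction only needs $a$ and $b$ to have finite order, so periodic groups are DSC, as the introduction mentions. The converse shows that every DSC semigroup is simple and every completely simple DSC semigroup is a group. Finiteness enters only through ``finite simple implies completely simple''.

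The second half needs cleaning up before it counts as a written proof. Delete the abandoned candidates, several of which come with false claims:
\begin{itemize}
\item $\Delta\cup\{(a,b):a\in bS\}$ is first called compatible and then shown not to be.
\item $\{(a,b):a\in S^1b\}$ is compatible with multiplication on the right, not the left.
\item The ``$(S\times S)(s,t)(S\times S)$-type constructions'' are never checked, and they are not ``equivalent'' to the relation before them.
\end{itemize}
Also rename the ideal, since you later reuse $I$ for the Rees index set. Finally, symmetry gives $J=S$ for every nonempty ideal $J$ outright, so the alternative ``or $|S|=1$'' is superfluous.
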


For infinite semigroups things are more interesting in several ways.
Periodic groups  are still DSC, but the additive group of integers is not DSC.
We actually currently do not know  whether there exists a DSC group that contains an element of infinite order.
Furthermore, in \cite{BARBER_2025} we showed that there exists DSC semigroups that are not groups.

We can re-phrase Theorem \ref{motivation theorem} as follows.
Let $\Cong(S)$ be the set of congruences on $S$ and $\Diag(S)$ the set of diagonal subsemigroups on $S$.
When $S$ is finite both of these sets are finite. We define the \textit{DSC coefficient} of such $S$ to be:
\begin{equation*}
    \chi(S)  =\frac{|\Cong(S)|}{|\Diag(S)|}.
\end{equation*}
Clearly $\chi(S) \in \Q \cap (0,1]$.
Then Theorem \ref{motivation theorem} asserts that:
\begin{equation*}
    \chi(S) = 1 \iff S \text{ is a group}.
\end{equation*}
One can think of the DSC coefficient as a measure of how close to being DSC a semigroup is and wonder what values $\chi(S)$ takes as $S$ ranges over all finite semigroups.
The main aim of this paper is to prove the following result:
\begin{thm}
    \label{Main Theorem}
    For any $\alpha \in \Q \cap (0,1]$ there is a finite semigroup $S$ with $\chi(S) = \alpha$.
\end{thm}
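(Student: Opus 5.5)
The plan is to realise every rational $\alpha\in(0,1]$ as $\chi(S)$ for a suitably chosen finite completely simple semigroup $S=\RM$, with $P$ normalised. Theorem \ref{motivation theorem} already covers $\alpha=1$ (take $S$ a group), so the content is the case $\alpha<1$.

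\textbf{Step 1: describe $\Diag(S)$ by linked triples.} The first step adapts the classical description of congruences on $\RM$ by linked triples $(r_I,N,r_\Lambda)$. Here $N\trianglelefteq G$, and $r_I$, $r_\Lambda$ are equivalences on $I$ and $\Lambda$ satisfying the linking conditions with respect to $P$ and $N$. I would prove that diagonal subsemigroups correspond bijectively to the analogous triples in which $r_I$ and $r_\Lambda$ are merely \emph{reflexive} relations, with the same linking conditions. For instance, $(i,j)\in r_I$ should require $p_{\lambda i}p_{\lambda j}^{-1}p_{\mu j}p_{\mu i}^{-1}\in N$ for all $\lambda,\mu$, and dually for $r_\Lambda$.

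The group coordinate still yields a normal subgroup. Since $G$ is finite, a subsemigroup of $G\times G$ containing the diagonal is a subgroup, hence of the form $\{(g,h): gh^{-1}\in N\}$ with $N$ normal. For the $I$ and $\Lambda$ coordinates, a diagonal subsemigroup $\rho$ projects to relations that are reflexive because $\rho$ contains the diagonal. Multiplying related pairs on the left and right by diagonal elements should reproduce the linking condition exactly as in the congruence case; symmetry and transitivity are never used there. I would check carefully that every such triple actually generates a compatible relation, and that the correspondence is injective.

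\textbf{Step 2: reduce $\chi(S)$ to counting.} Step 1 gives
\[
|\Cong(S)|=\sum_{N\trianglelefteq G} e_I(N)\,e_\Lambda(N), \qquad |\Diag(S)|=\sum_{N\trianglelefteq G} r_I(N)\,r_\Lambda(N).
\]
Here $e_I(N)$ counts equivalences on $I$ all of whose related pairs are $N$-linked, and $r_I(N)$ counts reflexive relations with the same property. Choosing $P$ cleverly controls, for each $N$, which pairs of $I$ may be related. A pair can be related exactly when certain elements of $G$ lie in $N$, so the allowed pairs form a graph on $I$ that depends monotonically on $N$. When $N=G$ every pair is allowed, contributing $B(|I|)B(|\Lambda|)$ to the numerator and $2^{|I|(|I|-1)+|\Lambda|(|\Lambda|-1)}$ to the denominator. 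When $N$ is small only a few pairs are allowed: an allowed set of $k$ disjoint pairs gives $2^k$ equivalences and $4^k$ reflexive relations.

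\textbf{Step 3: hit every rational.} This is the main obstacle: tuning the parameters so that the ratio of the two sums equals a prescribed $p/q$. My first attempt takes $G$ with a rich chain or lattice of normal subgroups, such as a cyclic group $\Z_{n}$ or an elementary abelian group. I would choose $|\Lambda|=1$, which kills the $\Lambda$ factors, and choose $P$ so that as $N$ grows the allowed pairs on $I$ form successively larger matchings, or stay empty.

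Each $N$ with no allowed pairs contributes $1$ to both sums. Taking many such $N$ therefore adds the same integer $t$ to numerator and denominator. Each $N$ with $k$ allowed disjoint pairs contributes $2^k$ and $4^k$ respectively. The resulting ratios have the form $(t+\sum 2^{k_j})/(t+\sum 4^{k_j})$, and I would try to show these, possibly with extra freedom from $|\Lambda|=2$ or from a direct product of groups, already give every rational in $(0,1)$. If this family turns out too rigid, my fallback is to look for a way to combine constructions, for example through an arithmetic identity in which the numerator and denominator can be adjusted independently by adding contributions $(a,b)$ with $a\le b$ from different normal subgroups. I expect this number-theoretic matching, rather than the structural description in Step 1, to be where most of the work lies.
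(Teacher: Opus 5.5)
Your Steps 1 and 2 follow the paper's route: diagonal subsemigroups of $\RM$ over a finite group correspond to linked \emph{reflexive} triples, and $\chi$ becomes a ratio of sums, over normal subgroups, of products of $I$- and $\Lambda$-counts. There is one slip there. The correct linking condition is $q_{\lambda\mu ij}=p_{\lambda i}p_{\mu i}^{-1}p_{\mu j}p_{\lambda j}^{-1}\in N$, which says that columns $i$ and $j$ of $P$ agree modulo $N$ up to a \emph{right} factor. Your expression $p_{\lambda i}p_{\lambda j}^{-1}p_{\mu j}p_{\mu i}^{-1}$ asks for a left factor instead, and the two differ for nonabelian $G$.

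The real gap is Step 3, which carries all the content and fails as set up. Taking $|\Lambda|=1$ does not just remove the $\Lambda$-factor. Every extract then has $\lambda=\mu$ and equals $1$ (your own expression also collapses to $1$). So every relation on $I$ is linked for every $N$, and $\chi(S)=B(|I|)/2^{|I|^2-|I|}$ whatever $G$ and $P$ are; there are no matchings to tune. Any control requires $|I|,|\Lambda|\ge 2$. The ratio must then also include the $\Lambda$-counts and the always-full $N=G$ term, which contributes $B(a)B(b)$ and $2^{a^2-a}2^{b^2-b}$. Beyond this, you neither construct a pair $(G,P)$ realising a prescribed pattern of linked pairs across the normal subgroups, nor show that the resulting ratios exhaust $\Q\cap(0,1)$. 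Both are left as ``I would try''.

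The missing idea is to make every normal subgroup either \emph{full} or \emph{trivial}, with both numbers free. A full subgroup contains all extracts, so every relation links. A trivial one contains no extract with $\lambda\ne\mu$, $i\ne j$, so only $\Delta_I,\Delta_\Lambda$ link. The paper takes $|I|=a$, $|\Lambda|=b\ge 2$, $G=\Z_{p^k}$ with $p>2^{ab+1}$ prime, and $P$ with the $ab$ distinct entries $2^m p^r$, $0\le m<ab$. A nontrivial extract is then $p^r(2^s-2^t+2^u-2^v)$ with $s,t,u,v$ distinct. Since $0<|2^s-2^t+2^u-2^v|\le 2^{ab+1}<p$, it generates exactly $p^r\Z_{p^k}$. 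The $c=r+1$ subgroups containing $p^r\Z_{p^k}$ are full and the $d=k-r$ others are trivial, giving
\[
\chi(S)=\frac{c\,B(a)B(b)+d}{c\,2^{a^2-a}2^{b^2-b}+d},
\]
with $c\ge 1$ and $d\ge 0$ arbitrary. For $\alpha=\beta/\gamma\in\bigl[B(a)B(b)/2^{a^2-a+b^2-b},1\bigr)$, put $c=\gamma-\beta$ and $d=\beta\,2^{a^2-a+b^2-b}-\gamma B(a)B(b)$. Since $B(n)\le 2^{(n^2-n)/2}$, the left endpoint tends to $0$ as $a,b$ grow, so this covers all of $\Q\cap(0,1)$.

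Your observation that trivial subgroups add the same integer to both sums is half of this. The other half is to make the number of full subgroups a free parameter as well. Then hitting $\alpha$ reduces to solving one linear equation in $(c,d)$, rather than a delicate combination of intermediate matchings.
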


To prove this result we will use the Rees matrix construction.
This construction was originally introduced by Suschkewitsch who employed it to give a full structural description of completely simple semigroups \cite{Su28}.
Here we take a group, $G$, two index sets $I$ and $\Lambda$ and a $\Lambda \times I$ matrix $P$ with entries from $G$.
The Rees matrix semigroup $\RM$ is the set $I \times G \times \Lambda$ with multiplication:
\begin{equation*}
    (i,g,\lambda)(j,h,\mu) = (i , g p_{\lambda j} h ,\mu).
\end{equation*}

The main tool we will use to prove Theorem \ref{Main Theorem} is characterizing the diagonal subsemigroups on a Rees matrix semigroup.
There is a description of the congruences on a Rees matrix semigroup in terms of normal subgroups of the group and equivalence relations on the index sets, originally due to Preston \cite{Preston_1961}.
It turns out that there is an analogous description for the diagonal subsemigroups on a Rees matrix semigroup, namely we can describe them using normal subgroups of the group and reflexive relations on the index sets, with the caveat that the group must be DSC. 

The paper is structured as follows. In section 2 we give the characterization of the diagonal subsemigroups on a Rees matrix semigroup.
In Section 3 we use this characterization to evaluate the DSC coefficient for a Rees matrix semigroup, and show that we can obtain any rational number between zero and one as the DSC coefficient of a Rees matrix semigroup.
In the final section we make some observations about the DSC coefficient of Clifford semigroups and contrast this to the result for Rees matrix semigroups.

We will only require some basic concepts from semigroup theory which will be introduced as they are needed.
For this paper we will use $\N$ to denote the set $\{1,2,3\dots \}$ and $\N_0=\N\cup\{0\}$.
Given a set $X$ we will let $\Delta_X = \{(x,x) : x \in X \}$ be the diagonal relation on $X$. 
For a subset $\rho$ of $X \times Y$, we will write $\rho^{-1}$ for the set $\{(y,x): (x,y) \in \rho\}$.

\section{Diagonal Subsemigroups on Rees matrix semigroups}

Let $S$ be a Rees matrix semigroup $\RM$ over the group $G$.
We start this section by reviewing the congruence description for Rees matrix semigroups; for a more detailed exploration see \cite[Section 3.5]{Ho95}.
For any $i,j \in I$ and $\lambda , \mu \in \Lambda$ we define an \textit{extract} of the matrix $P$ to be $q_{\lambda \mu i j} = p_{\lambda i} p^{-1}_{\mu i} p_{\mu j} p_{\lambda j}^{-1}$.
We will be interested in triples of the form $(N,\mathcal{S},\mathcal{T})$, where $N$ is a normal subgroup of $G$, $\mathcal{S}$ is a relation on $I$ and $\mathcal{T}$ is a relation on $\Lambda$.
A triple is \textit{linked} if for all $i,j \in I$ and $\lambda,\mu \in \Lambda$:
\begin{align*}
    (i,j) \in \mathcal{S} & \implies q_{\lambda \mu i j} \in N, \\
    (\lambda,\mu) \in \mathcal{T} & \implies q_{\lambda \mu i j} \in N.
\end{align*}
And a triple is an \textit{equivalence triple} if the relations $\mathcal{S}$, $\mathcal{T}$ are equivalence relations.
Normally in the literature linked equivalence triples are simply called \textit{linked triples}.
However, as we will need another type of triple, where the relations are just reflexive, we will use the full term linked equivalence triple.

If we have a relation $\rho$ on $S$, then we can get a triple $(N_\rho , \rho_I , \rho_\Lambda)$ where:
\begin{align}
    \label{lab1}
    N_\rho & = \{ g \in G : (i,g,\lambda) \normspace{\rho} (i,1,\lambda), \forall i \in I, \lambda \in \Lambda \}, \\
    \label{lab2}
    \rho_I & = \{ (i,j) : (i,p_{\lambda i}^{-1},\lambda) \normspace{\rho} (j,p_{\lambda j}^{-1} , \lambda), \forall \lambda \in \Lambda \}, \\
    \label{lab3}
    \rho_\Lambda & = \{(\lambda,\mu) : (i,p_{\lambda i}^{-1},\lambda) \normspace{\rho} (i,p_{\mu i}^{-1} , \mu) , \forall i \in I \}.
\end{align}
If $(N,\mathcal{S},\mathcal{T})$ is a triple, we define a relation $\rho_{N,\mathcal{S,T}}$  by:
\begin{equation}
    \begin{split}
        \label{lab4}
        (i,g,\lambda) \normspace{\rho_{N,\mathcal{S,T}}} (j,h,\mu) \iff & (i,j) \in \mathcal{S},(\lambda , \mu) \in \mathcal{T},\\
        & p_{\nu i} g p_{\lambda k}p_{\mu k}^{-1} h^{-1} p_{\nu j}^{-1} \in N, \forall k \in I , \nu \in \Lambda.
    \end{split}
\end{equation}
Then we have the following result due to Preston \cite{Preston_1961} that gives a direct correspondence between the congruences on a Rees matrix semigroup and the collection of linked equivalence triples:
\begin{thm}
    \label{Cong Classification}
    If $\rho$ is a congruence then $(N_\rho,\rho_I,\rho_\Lambda)$ is a linked equivalence triple and if $(N,\mathcal{S},\mathcal{T})$ is a linked equivalence triple then $\rho_{N,\mathcal{S},\mathcal{T}}$ is a congruence.
    Furthermore the maps $\rho \mapsto (N_\rho,\rho_I,\rho_\Lambda)$ and $(N,\mathcal{S},\mathcal{T}) \mapsto \rho_{N,\mathcal{S},\mathcal{T}}$ are mutual inverses. \qed
\end{thm}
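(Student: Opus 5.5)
The plan is to prove the classical correspondence of Theorem \ref{Cong Classification} directly from the definitions (\ref{lab1})--(\ref{lab4}), in three steps: first that a congruence yields a linked equivalence triple, then that a linked equivalence triple yields a congruence, and finally that the two maps are mutually inverse. Throughout I will use the normalisation that $e_{i\lambda}=(i,p_{\lambda i}^{-1},\lambda)$ is the idempotent of the $\mathcal{H}$-class indexed by $(i,\lambda)$. I will also use the basic multiplication identity
\[
(i,g,\lambda)=(i,1,\lambda)\,e_{\mu k}\,(k,p_{\lambda k}^{-1}\cdots),
\]
in the sense that left and right multiplication by suitable elements moves us between $\mathcal{H}$-classes, so that each such map is a bijection of groups.

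\textbf{Step 1 (congruence gives a linked equivalence triple).} Let $\rho$ be a congruence. Multiplying $(i,g,\lambda)\,\rho\,(i,1,\lambda)$ on the left and right by elements of $S$ shows that membership in $N_\rho$ does not depend on $(i,\lambda)$. It then follows that $N_\rho$ is the kernel of the induced group congruence on the maximal subgroup $H_{i\lambda}\cong G$, transported via $g\mapsto(i,gp_{\lambda i}^{-1},\lambda)$ (up to the correct normalisation), and hence $N_\rho$ is normal. The relations $\rho_I$ and $\rho_\Lambda$ are equivalences because $\rho$ is reflexive, symmetric and transitive, and the quantifier ``for all $\lambda$'' in their definitions is harmless. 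For linkedness, take $(i,j)\in\rho_I$ and multiply $e_{i\lambda}\,\rho\,e_{j\lambda}$ on the left by $e_{i\mu}$. The left-hand side stays $e_{i\lambda}$-like, while the right-hand side becomes $(i,p_{\mu i}^{-1}p_{\mu j}p_{\lambda j}^{-1},\lambda)$. Comparing the group coordinates with that of $e_{i\lambda}$ produces exactly the extract $q_{\lambda\mu ij}$ (up to conjugation) as an element of $N_\rho$. The argument for $\rho_\Lambda$ is dual.

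\textbf{Step 2 (linked equivalence triple gives a congruence).} Given a linked equivalence triple $(N,\mathcal S,\mathcal T)$, reflexivity, symmetry and transitivity of $\rho_{N,\mathcal S,\mathcal T}$ follow from the corresponding properties of $\mathcal S$, $\mathcal T$ and $N$, since the condition is of the form $a b^{-1}\in N$ with $a$, $b$ depending on one side each. For compatibility it suffices, by transitivity, to check multiplication on one side at a time. Multiplying on the right by $(k,x,\nu)$ makes both products land in the same column $\nu$, and the condition becomes
\[
p_{\nu' i}\,g p_{\lambda k}x\,p_{\nu k'}p_{\nu k'}^{-1}x^{-1}p_{\mu k}^{-1}h^{-1}p_{\nu' j}^{-1}\in N,
\]
which reduces to the original condition at index $k$. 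Left multiplication by $(k,x,\nu)$ changes the outer factor to $p_{\nu i}$ versus $p_{\nu j}$. Rewriting and using normality, the required membership differs from the hypothesis by extracts of the form $q_{\nu\nu' ij}$, and these lie in $N$ by linkedness since $(i,j)\in\mathcal S$.

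\textbf{Step 3 (mutual inverses).} I will compute $N_{\rho_{N,\mathcal S,\mathcal T}}=N$, using that $p_{\nu i}gp_{\nu i}^{-1}\in N$ if and only if $g\in N$. Similarly $(\rho_{N,\mathcal S,\mathcal T})_I=\mathcal S$, because the group condition for $e_{i\lambda}$ versus $e_{j\lambda}$ reduces to an extract, which lies in $N$ by linkedness. The dual statement holds for $\Lambda$. Conversely, for a congruence $\rho$ I will show $\rho=\rho_{N_\rho,\rho_I,\rho_\Lambda}$. The containment $\subseteq$ follows by multiplying a related pair by idempotents to extract the index relations and the group condition. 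The reverse containment goes by chaining: from $(i,g,\lambda)$ move to $(j,\cdot,\lambda)$ using $\rho_I$, then to $(j,\cdot,\mu)$ using $\rho_\Lambda$, and absorb the remaining discrepancy inside one $\mathcal H$-class by $N_\rho$.

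I expect the main obstacle to be this last chaining step together with the bookkeeping in Step 2. The difficulty is keeping track of which conjugates of which extracts appear. My first approach would be to normalise $P$ so that one row and one column consist of identities, which is allowed up to isomorphism, so that the extracts simplify to $p_{\lambda i}p_{\lambda j}^{-1}$-type expressions.
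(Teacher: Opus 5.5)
Your proof is correct. The paper gives no proof of Theorem \ref{Cong Classification}; it is quoted from Preston. However, the lemmas of Section 2 specialise to one, because for a congruence Lemma \ref{Use of DSC} holds over any group $G$: the restriction of $\rho$ to $H_{i\lambda}$ is already a group congruence. Your three steps follow the same skeleton, but you exploit symmetry and transitivity where the paper deliberately avoids them.
\begin{itemize}
\item You get normality of $N_\rho$, and the criterion $(i,x,\lambda)\,\rho\,(i,y,\lambda)\iff xy^{-1}\in N_\rho$, for free from the group congruence on $H_{i\lambda}$. The paper instead checks closure under products, inverses and conjugation by explicit products.
\item You check compatibility one side at a time and glue by transitivity. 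The paper's Lemma \ref{linked gives diagsubsemi} handles both factors at once.
\item You obtain $\rho_{N_\rho,\rho_I,\rho_\Lambda}\subseteq\rho$ by chaining $(i,g,\lambda)\,\rho\,(j,\cdot,\lambda)\,\rho\,(j,\cdot,\mu)\,\rho\,(j,h,\mu)$. The paper uses a single product of three related pairs padded by reflexive ones.
\item In the forward inclusion, the paper gets $(i,j)\in\rho_I$ by inverting a related pair inside $H_{i\lambda}$ via Lemma \ref{Use of DSC}.
\end{itemize}
Your route is shorter for congruences. The paper's is the one that survives when $\rho$ is only reflexive and compatible, which is the point of Section 2.

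Two details should be tightened.

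First, Step 2 does not need linkedness. After conjugating by $p_{\nu'k}x$, the condition required of the left product is literally the hypothesis at index $\nu$, because the definition of $\rho_{N,\mathcal S,\mathcal T}$ quantifies over all $\nu\in\Lambda$. So $\rho_{N,\mathcal S,\mathcal T}$ is a congruence for any normal $N$ and equivalences $\mathcal S,\mathcal T$. Linkedness is used only for $\mathcal S\subseteq(\rho_{N,\mathcal S,\mathcal T})_I$ and its dual.

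Second, in Step 3 the inclusion $\rho\subseteq\rho_{N_\rho,\rho_I,\rho_\Lambda}$ does not come from multiplying by idempotents alone. No single two-sided multiplication turns $(i,g,\lambda)$ and $(j,h,\mu)$ into $e_{i\nu}$ and $e_{j\nu}$ simultaneously. Instead:
\begin{itemize}
\item First get the group condition, as the paper does, by multiplying on the left by $(l,1,\nu)$ and on the right by $(k,p_{\mu k}^{-1}h^{-1}p_{\nu j}^{-1},\omega)$.
\item Then right multiplication by $(i,p_{\lambda i}^{-1}g^{-1}p_{\nu i}^{-1},\nu)$ gives $e_{i\nu}\,\rho\,(j,c,\nu)$ with $c=hp_{\mu i}p_{\lambda i}^{-1}g^{-1}p_{\nu i}^{-1}$.
\item Since $cp_{\nu j}$ is a conjugate of the inverse of the group-condition element for $k=i$, your criterion gives $(j,c,\nu)\,\rho\,e_{j\nu}$.
\item Transitivity then yields $(i,j)\in\rho_I$; the argument for $\rho_\Lambda$ is dual.
\end{itemize}
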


This bijection also preserves the structure of the congruence lattice, as shown by Kapp and Schneider \cite{kapp1969completely}, but it will not be required for this paper. 

In the rest of this section we show that a very similar result holds for the diagonal subsemigroups on a Rees matrix semigroup under the additional assumption that the group $G$ is DSC.
The classification will feature a weaker version of linked equivalence triples
$(N , \mathcal{S} , \mathcal{T})$ where 
 the relations $\mathcal{S},\mathcal{T}$ are only required to be reflexive; we call them \emph{linked reflexive triples}.

Before we set up a bijection between linked reflexive triples and diagonal subsemigroups we will require the following fact about diagonal subsemigroups on Rees matrix semigroups where the group is DSC.

\begin{lemma}
    \label{Use of DSC}
    Let $\rho$ be a diagonal subsemigroup of $S$.
    If $(i,g,\lambda) \normspace{\rho} (i,h,\lambda)$ then $(i,g^{-1},\lambda) \normspace{\rho} (i,h^{-1},\lambda)$.
\end{lemma}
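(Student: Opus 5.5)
The idea is to restrict $\rho$ to a single $\mathcal{H}$-class and use the standing assumption of this section that $G$ is DSC. Fix $i\in I$ and $\lambda\in\Lambda$, write $p=p_{\lambda i}$, and let $H_{i\lambda}=\{(i,x,\lambda): x\in G\}$. Since $(i,x,\lambda)(i,y,\lambda)=(i,xpy,\lambda)$, the set $H_{i\lambda}$ is a subsemigroup of $S$. The map $\varphi\colon G\to H_{i\lambda}$, $a\mapsto (i,ap^{-1},\lambda)$, is an isomorphism, because
\[
\varphi(a)\varphi(b)=(i,ap^{-1}pbp^{-1},\lambda)=\varphi(ab).
\]
So $H_{i\lambda}$ is a copy of $G$.

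Now pull $\rho$ back to $G$ by defining
\[
\sigma=\{(a,b)\in G\times G : \varphi(a)\normspace{\rho}\varphi(b)\}.
\]
Since $\rho$ is reflexive, $\sigma$ is reflexive. Since $\rho$ is compatible and $\varphi$ is a homomorphism, $\sigma$ is compatible: if $(a,b),(c,d)\in\sigma$, then $(\varphi(a)\varphi(c),\varphi(b)\varphi(d))\in\rho$, and this pair is $(\varphi(ac),\varphi(bd))$. Hence $\sigma$ is a diagonal subsemigroup of $G$. Because $G$ is DSC, $\sigma$ is a congruence on $G$. Every congruence on a group is determined by a normal subgroup $K$, with $a\mathrel{\sigma} b \iff ab^{-1}\in K$. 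In particular $\sigma$ is compatible with inversion and with multiplication by fixed elements on either side.

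To finish, translate the hypothesis into $\sigma$. From $(i,g,\lambda)\normspace{\rho}(i,h,\lambda)$ we have $\varphi(gp)\normspace{\rho}\varphi(hp)$, so $gp\mathrel{\sigma}hp$. Right-multiplying by $p^{-1}$ gives $g\mathrel{\sigma}h$. Applying inversion gives $g^{-1}\mathrel{\sigma}h^{-1}$, and right-multiplying by $p$ gives $g^{-1}p\mathrel{\sigma}h^{-1}p$. Applying $\varphi$ back yields $(i,g^{-1},\lambda)\normspace{\rho}(i,h^{-1},\lambda)$, as required.

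The only step needing care is that inversion in $G$ is not inversion in the group $H_{i\lambda}$. The inverse of $(i,g,\lambda)$ in $H_{i\lambda}$ is $(i,p^{-1}g^{-1}p^{-1},\lambda)$, so we cannot just say "congruences on groups respect inverses." This is handled by working with the congruence $\sigma$ on $G$ through the explicit isomorphism $\varphi$. The translations by $p^{\pm1}$ are then harmless, since $\sigma$, being a group congruence, is invariant under multiplication by constants.
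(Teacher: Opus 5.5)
Your proof is correct and follows essentially the same route as the paper: restrict $\rho$ to the $\mathcal{H}$-class $H_{i\lambda}\cong G$ and use that $G$ is DSC to conclude that the restriction is a congruence. The only difference is the finish. The paper uses just the symmetry of that congruence and then multiplies $(i,h,\lambda)\mathrel{\rho}(i,g,\lambda)$ on the left by $(i,g^{-1}p_{\lambda i}^{-1},\lambda)$ and on the right by $(i,p_{\lambda i}^{-1}h^{-1},\lambda)$ directly in $S$. You instead transport to $G$ via $a\mapsto(i,ap^{-1},\lambda)$ and invoke the normal-subgroup description of group congruences, which handles the $p_{\lambda i}$-twist in the inversion equally well.
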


\begin{proof}
    The $\mathcal{H}$-class $H = H_{i\lambda} = \{ (i,g,\lambda):g\in G\}$ is a subgroup of $\RM$ that is isomorphic to $G$.
    When we restrict $\rho$ to $H$ we get a diagonal subsemigroup on $H$, which must be a congruence as $H$ is DSC.
    So by symmetry, $(i,h,\lambda) \normspace{\rho} (i,g,\lambda)$.
    Therefore:
    \begin{align*}
        (i,g^{-1},\lambda) & = (i,g^{-1} p_{\lambda i}^{-1},\lambda)(i,h,\lambda)(i,p_{\lambda i}^{-1}h^{-1},\lambda) \\
        & \bigspace{\rho} (i,g^{-1} p_{\lambda i}^{-1},\lambda)(i,g,\lambda)(i,p_{\lambda i}^{-1}h^{-1},\lambda) = (i,h^{-1},\lambda). \qedhere
    \end{align*}
\end{proof}

The proofs of the remainder of the results in this section are adapted from \cite[Section 3.5]{Ho95}.

\begin{lemma}
    Let $\rho$ be a diagonal subsemigroup, then $(N_\rho , \rho_I , \rho_\Lambda)$ is a linked reflexive triple.
\end{lemma}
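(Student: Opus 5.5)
We must check three things: $N_\rho$ is a normal subgroup of $G$, $\rho_I$ and $\rho_\Lambda$ are reflexive, and the triple is linked. We assume throughout, as in the rest of this section, that $G$ is DSC, so that Lemma \ref{Use of DSC} is available. Reflexivity is immediate, since $\rho$ contains the diagonal of $S$: taking $i=j$ in \eqref{lab2} and $\lambda=\mu$ in \eqref{lab3} gives pairs of equal elements.

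For $N_\rho$, first note $1\in N_\rho$ by reflexivity. For closure under products, take $g,h\in N_\rho$ and multiply
\[
(i,g,\lambda)(i,p_{\lambda i}^{-1}h,\lambda) \;\rho\; (i,1,\lambda)(i,p_{\lambda i}^{-1},\lambda),
\]
which gives $(i,gh,\lambda)\,\rho\,(i,1,\lambda)$. Here the factor $(i,p_{\lambda i}^{-1}h,\lambda)\,\rho\,(i,p_{\lambda i}^{-1},\lambda)$ is obtained by left-multiplying $(i,h,\lambda)\,\rho\,(i,1,\lambda)$ by $(i,p_{\lambda i}^{-2},\lambda)$ on both sides, using that $\rho$ is a subsemigroup containing the diagonal.

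Closure under inverses is exactly Lemma \ref{Use of DSC}, applied with $h=1$. Since $G$ is finite in our application, closure under products would already suffice, but the lemma handles the general DSC case.

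For normality, let $g\in N_\rho$ and $x\in G$. Conjugate inside the $\mathcal H$-class $H_{i\lambda}$: multiply $(i,g,\lambda)\,\rho\,(i,1,\lambda)$ on the left by $(i,x p_{\lambda i}^{-1},\lambda)$ and on the right by $(i,p_{\lambda i}^{-1}x^{-1},\lambda)$. This yields $(i,xgx^{-1}p_{\lambda i}^{-1},\lambda)\,\rho\,(i,p_{\lambda i}^{-1},\lambda)$. One further multiplication by $(i,p_{\lambda i}^{-1},\lambda)$ on the right removes the stray factor $p_{\lambda i}^{-1}$ and gives $(i,xgx^{-1},\lambda)\,\rho\,(i,1,\lambda)$. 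Since $i$ and $\lambda$ were arbitrary, $xgx^{-1}\in N_\rho$.

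For linkedness, suppose $(i,j)\in\rho_I$ and fix $\lambda,\mu\in\Lambda$. The pair $(i,p_{\mu i}^{-1},\mu)\,\rho\,(j,p_{\mu j}^{-1},\mu)$ is available. Multiply it on the left by $(k,1,\lambda)$ and on the right by $(j,1,\nu)$ for a suitable choice of $k,\nu$; take $k=i$ and then correct the $I$-coordinate. Both sides then lie in the same $\mathcal H$-class, and the relation becomes $(k,a,\nu)\,\rho\,(k,b,\nu)$ with $ab^{-1}$ a conjugate of $q_{\lambda\mu ij}$.

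Using Lemma \ref{Use of DSC} and the subgroup argument above, we then get $(k,ab^{-1},\nu)\,\rho\,(k,1,\nu)$, essentially by multiplying by the $\rho$-related inverses. This must hold for all $k,\nu$, which we arrange by the freedom in the outer multipliers. Normality then gives $q_{\lambda\mu ij}\in N_\rho$. The $\Lambda$ case is dual.

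\textbf{Main obstacle.} The main obstacle is this last step. Without symmetry we cannot divide two related pairs as in the congruence proof of \cite[Section 3.5]{Ho95}. The workaround is to reduce to a single $\mathcal H$-class, where $\rho$ restricts to a congruence by the DSC hypothesis, and to track the matrix entries carefully so that the resulting element is exactly a conjugate of the extract $q_{\lambda\mu ij}$.
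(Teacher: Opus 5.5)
Your proposal follows the paper's proof step for step: reflexivity from $\Delta_S\subseteq\rho$, products by the same computation inside $H_{i\lambda}$, inverses via Lemma~\ref{Use of DSC} with $h=1$, normality by conjugating inside $H_{i\lambda}$, and linkedness by sandwiching the $\rho_I$-pair between $(k,1,\lambda)$ and a right multiplier with $\Lambda$-coordinate $\nu$. For normality, your multipliers are exactly the paper's with $n=x^{-1}$.

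Two computations need correcting. In the normality step, $(i,a,\lambda)(i,b,\lambda)=(i,ap_{\lambda i}b,\lambda)$, so your multipliers give $(i,xgx^{-1},\lambda)\,\rho\,(i,1,\lambda)$ immediately. There is no stray factor. In any case, right multiplication by $(i,p_{\lambda i}^{-1},\lambda)$ could not remove one, since that element is the identity of $H_{i\lambda}$.

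In the linkedness step nothing needs ``correcting'', because $k$ and $\nu$ are free from the outset. Your choice gives $a=p_{\lambda i}p_{\mu i}^{-1}p_{\mu j}$ and $b=p_{\lambda j}$, so $ab^{-1}=q_{\lambda\mu ij}$ exactly, not merely up to conjugacy. Right multiplying both sides by the diagonal pair on $(k,p_{\nu k}^{-1}p_{\lambda j}^{-1},\nu)$ then gives $(k,q_{\lambda\mu ij},\nu)\,\rho\,(k,1,\nu)$. Equivalently, use $(j,p_{\lambda j}^{-1},\nu)$ as the right multiplier from the start, as the paper does.

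Neither Lemma~\ref{Use of DSC} nor normality is needed there, so the ``main obstacle'' you describe does not arise. The DSC hypothesis enters only in showing that $N_\rho$ is closed under inverses.
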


\begin{proof}
    As $\rho$ is reflexive, it is easy to see that $\rho_I$ and $\rho_\Lambda$ are reflexive, and $1 \in N_\rho$.
    Take $g , h \in N_\rho$, then for all $i \in I$ and $\lambda \in \Lambda$ we have $(i,g,\lambda) \normspace{\rho}(i,1,\lambda)$ and $(i,h,\lambda) \normspace{\rho} (i,1,\lambda)$.
    Hence we have:
    \begin{align*}
        (i,gh,\lambda) = (i,g,\lambda) (i,p_{\lambda i}^{-2},\lambda) (i,h,\lambda) \normspace{\rho} (i,1,\lambda) (i,p_{\lambda i}^{-2},\lambda) (i,1,\lambda) = (i,1 , \lambda).
    \end{align*}
    It follows that $gh \in N_\rho$, and so $N_\rho$ is a subsemigroup of $G$.
    As $(i,g,\lambda) \normspace{\rho} (i,1,\lambda)$, by Lemma \ref{Use of DSC} we have that $(i,g^{-1},\lambda) \normspace{\rho} (i,1,\lambda)$ for all $i \in I$ and $\lambda \in \Lambda$.
    Thus $g^{-1} \in N_\rho$ and $N_\rho$ is a subgroup of $G$.
    Finally if we have $n \in G$ then:
    \begin{align*}
        (i,n^{-1}gn,\lambda) & = (i,n^{-1}p_{\lambda i}^{-1},\lambda)(i,g,\lambda)(i,p_{\lambda i}^{-1}n,\lambda) \\
        &\bigspace{\rho} (i,n^{-1}p_{\lambda i}^{-1},\lambda)(i,1,\lambda)(i,p_{\lambda i}^{-1}n,\lambda) = (i,1,\lambda).
    \end{align*}
    Therefore $n^{-1}gn \in N_\rho$ and $N_\rho$ is a normal subgroup of $G$.

    Now let $i,j \in I$ and $\lambda,\mu \in \Lambda$.
    Suppose that $(i,j) \in \rho_I$.
    So $(i,p_{\mu i}^{-1} , \mu) \normspace{\rho} (j,p_{\mu j}^{-1} , \mu)$ and for arbitrary $k \in I$ and $\nu \in \Lambda$ we have:
    \begin{align*}
        (k,p_{\lambda i}p_{\mu i}^{-1} p_{\mu j} p_{\lambda j}^{-1} , \nu) & = (k,1,\lambda)(i,p_{\mu i}^{-1},\mu)(j,p_{\lambda j}^{-1},\nu) \\
        & \bigspace{\rho} (k,1,\lambda)(j,p_{\mu j}^{-1},\mu)(j,p_{\lambda j}^{-1},\nu) \\
        & = (k,1,\nu).
    \end{align*}
    Hence $q_{\lambda \mu i j} = p_{\lambda i}p_{\mu i}^{-1} p_{\mu j} p_{\lambda j}^{-1} \in N_\rho$.
    The case when $(\lambda,\mu) \in \rho_\Lambda$ is analogous, and thus 
    $(N_\rho , \rho_I , \rho_\Lambda)$ is a linked reflexive triple.
\end{proof}

\begin{lemma}
    \label{linked gives diagsubsemi}
    The relation $\rho_{N ,\mathcal{S},\mathcal{T}}$ is a diagonal subsemigroup on $S$.
\end{lemma}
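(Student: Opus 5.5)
The plan is to check the two defining properties of a diagonal subsemigroup directly from \eqref{lab4}: reflexivity and compatibility with multiplication. I expect only two things to be needed. First, $\mathcal{S}$ and $\mathcal{T}$ are reflexive. Second, $N$ is a normal subgroup of $G$. In particular, I do not expect to need the linking conditions or the DSC property of $G$. Those should matter only for the converse direction, that is, for showing the correspondence is a bijection.

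\emph{Reflexivity.} Take $(i,g,\lambda)\in S$. Since $\mathcal{S}$ and $\mathcal{T}$ are reflexive, $(i,i)\in\mathcal{S}$ and $(\lambda,\lambda)\in\mathcal{T}$. For any $k\in I$ and $\nu\in\Lambda$, the element required to lie in $N$ is
\[
p_{\nu i}\, g\, p_{\lambda k}p_{\lambda k}^{-1} g^{-1} p_{\nu i}^{-1}=1,
\]
and $1\in N$. Hence $\Delta_S\subseteq\rho_{N,\mathcal{S},\mathcal{T}}$.

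\emph{Compatibility.} Suppose $(i,g,\lambda)\,\rho_{N,\mathcal{S},\mathcal{T}}\,(j,h,\mu)$ and $(k,a,\kappa)\,\rho_{N,\mathcal{S},\mathcal{T}}\,(l,b,\tau)$. The two products are $(i,g p_{\lambda k} a,\kappa)$ and $(j,h p_{\mu l} b,\tau)$. Their index components satisfy $(i,j)\in\mathcal{S}$ and $(\kappa,\tau)\in\mathcal{T}$ by hypothesis. It remains to show that, for every $m\in I$ and $\nu\in\Lambda$,
\[
y = p_{\nu i}\, g\, p_{\lambda k}\, a\, p_{\kappa m} p_{\tau m}^{-1} b^{-1} p_{\mu l}^{-1} h^{-1} p_{\nu j}^{-1}\in N.
\]
The key step is to insert $p_{\mu k}^{-1}h^{-1}p_{\nu j}^{-1}\cdot p_{\nu j}h p_{\mu k}$ between $p_{\lambda k}$ and $a$. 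This factors $y$ as $y = x\cdot c\, z\, c^{-1}$, where $c=p_{\nu j}h$ and
\[
x=p_{\nu i}\, g\, p_{\lambda k}p_{\mu k}^{-1}h^{-1}p_{\nu j}^{-1},\qquad z=p_{\mu k}\, a\, p_{\kappa m}p_{\tau m}^{-1} b^{-1}p_{\mu l}^{-1}.
\]
Here $x\in N$ by the first relation, applied with the indices $k$ and $\nu$. Likewise $z\in N$ by the second relation, applied with the indices $m$ and $\mu$, where $\mu$ plays the role of the free index $\nu$ in \eqref{lab4}. Normality of $N$ then gives $czc^{-1}\in N$, and so $y\in N$.

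The only delicate point is finding this factorisation: choosing which free indices to instantiate in the two hypotheses so that the middle pieces telescope. Once that is seen, the proof is a routine verification.
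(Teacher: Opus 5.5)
Your proof is correct and follows essentially the same route as the paper. Reflexivity comes from the reflexivity of $\mathcal{S}$ and $\mathcal{T}$. For compatibility, you instantiate the first hypothesis at the index $k$ of the second pair and the second hypothesis at $\nu=\mu$, which is exactly the paper's choice. The only difference is presentation: the paper telescopes by multiplying cosets in $G/N$, whereas you write the explicit factorisation $y = x\cdot czc^{-1}$ and apply normality directly, which is the same computation.
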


\begin{proof}
    As $\mathcal{S}$ and $\mathcal{T}$ are reflexive it follows that $\rho_{N ,\mathcal{S},\mathcal{T}}$ is reflexive.
    If we have $(i_1,g_1,\lambda_1) \normspace{\rho_{N ,\mathcal{S},\mathcal{T}}} (j_1 , h_1 , \mu_1)$ and $(i_2,g_2,\lambda_2) \normspace{\rho_{N ,\mathcal{S},\mathcal{T}}} (j_2,h_2,\mu_2)$ then we want to show that:
    \begin{equation*}
        (i_1,g_1 p_{\lambda_1 i_2} g_2 , \lambda_2) \normspace{\rho_{N ,\mathcal{S},\mathcal{T}}} (j_1 , h_1 p_{\mu_1 j_2} h_2 , \mu_2).
    \end{equation*}
    This is equivalent to showing:
    \begin{enumerate}
        \item $(i_1,j_1) \in \mathcal{S}$,
        \item $(\lambda_2 , \mu_2) \in \mathcal{T}$,
        \item $Np_{\nu i_1} g_1 p_{\lambda_1 i_2} g_2 p_{\lambda_2 x} =Np_{\nu j_1} h_1 p_{\mu_1 j_2} h_2 p_{\mu_2 x}, \forall k \in I , \nu \in \Lambda$.
    \end{enumerate}
    Statements (1) and (2) follow immediately from the definition of $\rho_{N ,\mathcal{S},\mathcal{T}}$.
    For (3) fix $k \in I$ and $\nu \in \Lambda$. 
    From the definition of $\rho_{N,\mathcal{S},\mathcal{T}}$ we get that $Np_{\nu i_1}g_1p_{\lambda_1 i_2} =Np_{\nu j_1} h_1 p_{\mu_1 i_2} ,Np_{\mu_1 i_2}g_2p_{\lambda_2 k} =Np_{\mu_1 j_2} h_2 p_{\mu_2 k}$.
    Now we have:
    \begin{align*}
        Np_{\nu i_1} g_1 p_{\lambda_1 i_2} g_2 p_{\lambda_2 k}& = Np_{\nu i_1} g_1 p_{\lambda_1 i_2}Np_{\mu_1 i_2}^{-1} 
        Np_{\mu_1 i_2}g_2p_{\lambda_2 k} \\
        & = Np_{\nu j_1} h_1 p_{\mu_1 i_2} Np_{\mu_1 i_2}^{-1}Np_{\mu_1 j_2} h_2 p_{\mu_2 k}\\
        & = Np_{\nu j_1} h_1 p_{\mu_1 j_2} h_2 p_{\mu_2 k}.\qedhere
    \end{align*}
\end{proof}

\begin{lemma}
    Let $(N , \mathcal{S} , \mathcal{T})$ be a linked reflexive triple with $\rho = \rho_{N ,\mathcal{S},\mathcal{T}}$.
    Then $N_\rho = N , \rho_I  = \mathcal{S}$ and $\rho_\Lambda = \mathcal{T}$.
\end{lemma}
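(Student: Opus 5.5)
We prove the three equalities separately, each by two inclusions, unwinding definitions \eqref{lab1}--\eqref{lab4}. Throughout, the group condition in \eqref{lab4} is rewritten in coset form:
\[
Np_{\nu i}\,g\,p_{\lambda k}=Np_{\nu j}\,h\,p_{\mu k}\quad\text{for all } k\in I,\ \nu\in\Lambda .
\]
This is legitimate because $N$ is normal. The linkedness of the triple will be used exactly once in each of the second and third equalities.

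\emph{The equality $N_\rho=N$.} Here $g\in N_\rho$ means that $(i,g,\lambda)\,\rho\,(i,1,\lambda)$ for all $i,\lambda$. Since $\mathcal{S},\mathcal{T}$ are reflexive, this holds if and only if $p_{\nu i}\,g\,p_{\lambda k}p_{\lambda k}^{-1}p_{\nu i}^{-1}\in N$ for all $k,\nu$. The element $p_{\lambda k}p_{\lambda k}^{-1}$ is $1$, so the condition reads $p_{\nu i}gp_{\nu i}^{-1}\in N$. By normality this is equivalent to $g\in N$, which gives both inclusions at once.

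\emph{The equality $\rho_I=\mathcal{S}$.} Suppose $(i,j)\in\rho_I$. Then in particular $(i,p_{\lambda i}^{-1},\lambda)\,\rho\,(j,p_{\lambda j}^{-1},\lambda)$, and the first component condition in \eqref{lab4} gives $(i,j)\in\mathcal{S}$. Conversely, let $(i,j)\in\mathcal{S}$ and fix $\lambda$. We need $(\lambda,\lambda)\in\mathcal{T}$, which holds by reflexivity. We also need, for all $k,\nu$,
\[
p_{\nu i}\,p_{\lambda i}^{-1}\,p_{\lambda k}\,p_{\lambda k}^{-1}\,p_{\lambda j}\,p_{\nu j}^{-1}
= p_{\nu i}p_{\lambda i}^{-1}p_{\lambda j}p_{\nu j}^{-1}=q_{\nu\lambda ij}\in N .
\]
This membership follows from linkedness, since $(i,j)\in\mathcal{S}$.

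\emph{The equality $\rho_\Lambda=\mathcal{T}$.} This case is symmetric, but the group element to control is $p_{\nu i}p_{\lambda i}^{-1}p_{\lambda k}p_{\mu k}^{-1}p_{\mu i}p_{\nu i}^{-1}$. By normality it lies in $N$ if and only if $p_{\lambda i}^{-1}p_{\lambda k}p_{\mu k}^{-1}p_{\mu i}\in N$. Conjugating by $p_{\lambda i}$ turns this into $p_{\lambda k}p_{\mu k}^{-1}p_{\mu i}p_{\lambda i}^{-1}=q_{\lambda\mu k i}\in N$, which holds by linkedness when $(\lambda,\mu)\in\mathcal{T}$. The inclusion $\rho_\Lambda\subseteq\mathcal{T}$ again comes straight from the third component condition in \eqref{lab4}.

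The only mildly delicate step is this last rearrangement of products into an extract $q$. Normality of $N$ lets us conjugate freely, so I expect no real obstacle. Notably, the DSC hypothesis on $G$ is not needed for this lemma.
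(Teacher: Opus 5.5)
Your proof is correct and follows essentially the same route as the paper: unwind \eqref{lab4} using reflexivity of $\mathcal{S},\mathcal{T}$ and normality of $N$, then use linkedness to place the relevant extract ($q_{\nu\lambda ij}$, or a conjugate of $q_{\lambda\mu ki}$) in $N$. The only difference is that you write out the $\rho_\Lambda=\mathcal{T}$ case explicitly, where the paper just calls it dual.
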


\begin{proof}
    To see that $N_\rho = N$ notice that:
    \begin{align*}
        g \in N_\rho & \iff (i,g,\lambda)\normspace{\rho_{N,\mathcal{S},\mathcal{T}}}(i,1,\lambda),\forall i \in I,\lambda \in \Lambda \\
        & \iff (i,i) \in \mathcal{S} , (\lambda,\lambda) \in \mathcal{T} , Np_{\nu i} g p_{\lambda k}= Np_{\nu i}  p_{\lambda k}, \forall i,k \in I,\lambda,\nu \in \Lambda \\
        & \iff g \in N.
    \end{align*}
    Noting that, as the triple is linked, if $(i,j) \in \mathcal{S}$ then $Np_{\nu i} p_{\lambda i}^{-1} = Np_{\nu j}p_{\lambda j}^{-1}, \forall \lambda , \nu \in \Lambda$.
    So we have:
    \begin{align*}
        (i,j) \in \rho_I & \iff (i,p_{\lambda i}^{-1},\lambda) \normspace{\rho_{N , \mathcal{S} , \mathcal{T}}} (j,p_{\lambda j}^{-1},\lambda), \forall \lambda \in \Lambda \\
        & \iff (i,j) \in \mathcal{S}, (\lambda,\lambda) \in \mathcal{T},Np_{\nu i} p_{\lambda i}^{-1} p_{\lambda k} = Np_{\nu j} p_{\lambda j}^{-1} p_{\lambda k} , \forall k \in I , \lambda,\nu \in \Lambda \\
        & \iff (i,j) \in \mathcal{S} , Np_{\nu i} p_{\lambda i}^{-1} = Np_{\nu j} p_{\lambda j}^{-1}, \forall \lambda, \nu \in \Lambda \\
        & \iff (i,j) \in \mathcal{S}.
    \end{align*}
    Hence $\rho_I = \mathcal{S}$.
    The proof of $\mathcal{T} = \rho_\Lambda$ is dual to this.
\end{proof}

\begin{lemma}
    Let $\rho$ be a diagonal subsemigroup on $S$ with $N = N_\rho , \mathcal{S} = \rho_I$ and $\mathcal{T} = \rho_\Lambda$.
    Then $\rho = \rho_{N,\mathcal{S},\mathcal{T}}$.
\end{lemma}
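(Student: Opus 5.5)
We prove the two inclusions $\rho \subseteq \rho_{N,\mathcal{S},\mathcal{T}}$ and $\rho_{N,\mathcal{S},\mathcal{T}} \subseteq \rho$ separately, following Howie's argument for congruences. The only places where that argument uses symmetry are handled by Lemma \ref{Use of DSC}.

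\emph{First inclusion.} Suppose $(i,g,\lambda)\normspace{\rho}(j,h,\mu)$. Multiplying on the right by $(k,p_{\lambda k}^{-1}... )$-type elements does not directly produce the required relation. Instead, for every $\nu\in\Lambda$ we multiply on the right by $(l,p_{\nu l}^{-1}g^{-1}p_{\lambda l}^{-1}... ,\nu)$, chosen so that the left side becomes $(i,p_{\nu i}^{-1},\nu)$ after normalising. Concretely, right multiplication by $(k,x,\nu)$ gives $(i,gp_{\lambda k}x,\nu)\normspace{\rho}(j,hp_{\mu k}x,\nu)$. Taking $x=p_{\lambda k}^{-1}g^{-1}p_{\nu i}^{-1}$ yields $(i,p_{\nu i}^{-1},\nu)\normspace{\rho}(j,hp_{\mu k}p_{\lambda k}^{-1}g^{-1}p_{\nu i}^{-1},\nu)$. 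We then left-multiply by $(j,1,\nu')$-type elements to move both sides into a common $\mathcal{H}$-class and compare group parts. To get $(i,j)\in\rho_I$, we must show $(i,p_{\nu i}^{-1},\nu)\normspace{\rho}(j,p_{\nu j}^{-1},\nu)$. This follows if the group discrepancy lies in $N_\rho$: compatibility lets us multiply the relation $(j,n,\nu)\normspace{\rho}(j,1,\nu)$-type pairs in the right order to correct the right-hand side. The membership of the discrepancy in $N$ comes from left-multiplying the original pair by $(l,1,\nu)$ and right-multiplying by $(k,1,\kappa)$. This gives $(l,p_{\nu i}gp_{\lambda k},\kappa)\normspace{\rho}(l,p_{\nu j}hp_{\mu k},\kappa)$, a pair inside a single $\mathcal{H}$-class $H_{l\kappa}$. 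The key claim is that if $(l,a,\kappa)\normspace{\rho}(l,b,\kappa)$ then $ab^{-1}\in N_\rho$, i.e.\ $(m,ab^{-1},\theta)\normspace{\rho}(m,1,\theta)$ for all $m,\theta$. We prove it by right-multiplying by $(l,p_{\kappa l}^{-1}b^{-1}p_{\kappa l}^{-1},\kappa)$ to get $(l,ab^{-1}p_{\kappa l}^{-1},\kappa)\normspace{\rho}(l,p_{\kappa l}^{-1},\kappa)$. We then move to an arbitrary $(m,\theta)$ by multiplying on the left by $(m,1,\kappa)$ and on the right by $(l,1,\theta)$, adjusting by constants. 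Hence $p_{\nu i}gp_{\lambda k}p_{\mu k}^{-1}h^{-1}p_{\nu j}^{-1}\in N$ for all $k,\nu$. With this in hand, $(i,j)\in\rho_I$ follows by correcting the pair $(i,p_{\nu i}^{-1},\nu)\normspace{\rho}(j,c,\nu)$ above using the $N$-membership. Left-multiplying a pair $(j,c,\nu)\normspace{\rho}(j,c,\nu)$ is not enough, because we need to change the right-hand side only. So we use Lemma \ref{Use of DSC} together with the fact that $\rho$ restricted to $H_{j\nu}$ is a congruence, hence transitive, to replace $(j,c,\nu)$ by $(j,p_{\nu j}^{-1},\nu)$. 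The case of $\rho_\Lambda$ is dual.

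\emph{Second inclusion.} Suppose $(i,j)\in\mathcal{S}$, $(\lambda,\mu)\in\mathcal{T}$ and the $N$-condition holds. Fix some $\nu$ and $k$. Then
\[
(i,g,\lambda)=(i,p_{\nu i}^{-1},\nu)(k,x,\kappa)(k,y,\lambda)
\]
for suitable $x,y$, and similarly for $(j,h,\mu)$ using $(j,p_{\nu j}^{-1},\nu)$ and $(k,y',\mu)$. We relate the outer factors via $\rho_I$ and $\rho_\Lambda$, giving $(i,g,\lambda)\normspace{\rho}(j,h',\mu)$ for some $h'$ with $h'h^{-1}$ conjugate into $N$ by the $N$-condition. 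A final correction inside $H_{j\mu}$, using that $\rho|_{H_{j\mu}}$ is a congruence containing the pairs from $N_\rho$, gives $(i,g,\lambda)\normspace{\rho}(j,h,\mu)$.

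The main obstacle is the lack of transitivity of $\rho$ globally. Every correction step must therefore be done either by compatible multiplication or inside a single $\mathcal{H}$-class, where Lemma \ref{Use of DSC} and DSC-ness of $G$ make $\rho$ a congruence. So I would first isolate the claim that $\rho|_{H_{l\kappa}}$ is exactly the congruence of $H_{l\kappa}\cong G$ determined by $N_\rho$, and then use it repeatedly.
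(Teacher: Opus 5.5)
Your overall plan is close to the paper's, and your key claim that $(l,a,\kappa)\mathrel{\rho}(l,b,\kappa)$ implies $ab^{-1}\in N_\rho$ is correct, so condition (3) of the first inclusion is fine. The gap is in the two ``correction'' steps: both use transitivity of $\rho$ across different $\mathcal{H}$-classes, and that does not follow from what you cite.

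In the first inclusion you have $u\mathrel{\rho}v$ and $v\mathrel{\rho}w$, where $u=(i,p_{\nu i}^{-1},\nu)$, $v=(j,c,\nu)$ and $w=(j,p_{\nu j}^{-1},\nu)$. You conclude $u\mathrel{\rho}w$ because $\rho|_{H_{j\nu}}$ is a congruence. But that restriction is only transitive on chains lying entirely in $H_{j\nu}$, and $u\in H_{i\nu}$. This is exactly the global non-transitivity you name as the main obstacle, so the inference is invalid as stated. It can be repaired by compatibility. Here $w$ is the identity $e$ of the group $H_{j\nu}$; let $v^{-1}$ be the inverse of $v$ in that group. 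Then $(e,v^{-1})=(v^{-1},v^{-1})(v,w)\in\rho$, and $(u,v)(e,v^{-1})=(ue,e)=(u,w)$, since $e$ is a right identity for every element with $\Lambda$-coordinate $\nu$. The paper avoids any correction here: it multiplies the given pair by its own inverted image from Lemma~\ref{Use of DSC} and lands directly on $(i,p_{\nu i}^{-1},\nu)\mathrel{\rho}(j,p_{\nu j}^{-1},\nu)$.

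In the second inclusion the same step recurs (``a final correction inside $H_{j\mu}$''), and there even this trick is unavailable. If $i\neq j$ and $\lambda\neq\mu$, then $(i,g,\lambda)$ lies in neither the $\mathcal{R}$-class nor the $\mathcal{L}$-class of $H_{j\mu}$. So no one-sided multiplication by the idempotent of $H_{j\mu}$ fixes it.

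The fix is to build the $N$-adjustment into the middle factor rather than append it afterwards. Take $x=p_{\nu k}^{-1}p_{\nu i}gp_{\lambda k}p_{\kappa k}^{-1}$ and $x'=p_{\nu k}^{-1}p_{\nu j}hp_{\mu k}p_{\kappa k}^{-1}$. Then $xx'^{-1}$ is a conjugate of $p_{\nu i}gp_{\lambda k}p_{\mu k}^{-1}h^{-1}p_{\nu j}^{-1}\in N$, so $(k,x,\kappa)\mathrel{\rho}(k,x',\kappa)$, and
\[
(i,p_{\nu i}^{-1},\nu)(k,x,\kappa)(k,p_{\lambda k}^{-1},\lambda)=(i,g,\lambda),\qquad (j,p_{\nu j}^{-1},\nu)(k,x',\kappa)(k,p_{\mu k}^{-1},\mu)=(j,h,\mu).
\]
This exhibits $(i,g,\lambda)\mathrel{\rho}(j,h,\mu)$ as a single product of a $\rho_I$-pair, an $N_\rho$-pair and a $\rho_\Lambda$-pair, with no transitivity needed. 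It is precisely what the paper's five-factor product does.

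With these two repairs your argument goes through. As written, however, both inclusions rest on a cross-$\mathcal{H}$-class transitivity that does not follow from $\rho|_{H}$ being a congruence.
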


\begin{proof}
    Let $\rho^\prime = \rho_{N,\mathcal{S},\mathcal{T}}$.
    Suppose $(i,g,\lambda) \normspace{\rho} (j,h,\mu)$,
    to show $(i,g,\lambda) \normspace{\rho^\prime} (j,h,\mu)$ we have to prove that:
    \begin{enumerate}
        \item $(i,j) \in \mathcal{S}$,
        \item $(\lambda,\mu) \in \mathcal{T}$,
        \item $N p_{\nu i} g p_{\lambda k} = N p_{\nu j}hp_{\mu k}, \forall k \in I , \nu \in \Lambda$.
    \end{enumerate}
    Which is in turn equivalent to:
    \begin{enumerate}
        \item $ (i,p_{\nu i}^{-1},\nu) \normspace{\rho} (j,p_{\nu j}^{-1},\nu) ,\forall \nu \in \Lambda$,
        \item $ (k , p_{\lambda k}^{-1},\lambda) \normspace{\rho} (k,p_{\mu k}^{-1},\mu), \forall k \in I $,
        \item $ (l,p_{\nu i} g p_{\lambda k} p_{\mu k}^{-1} h^{-1} p_{\nu j}^{-1},\omega) \normspace{\rho} (l,1,\omega) , \forall k,l \in I , \nu , \omega \in \Lambda$.
    \end{enumerate}
    For (1) let $\nu$ be any element of $\Lambda$ and note that:
    \begin{align*}
        (i,p_{\nu i} g p_{\lambda i}^2,\lambda)& = (i,1,\nu)(i,g,\lambda)(i,p_{\lambda i},\lambda) \\
        & \bigspace{\rho} (i,1,\nu)(j,h,\mu)(i,p_{\lambda i},\lambda) = (i,p_{\nu j} h p_{\mu i} p_{\lambda i} , \lambda).
    \end{align*}
    Let $g^\prime = p_{\nu i} g p_{\lambda i}^2$ and $h^\prime = p_{\nu j} h p_{\mu i} p_{\lambda i}$.
    It follows from Lemma \ref{Use of DSC} that $(i,(g^\prime)^{-1},\lambda) \rho \\ (i,(h^\prime)^{-1},\lambda)$.
    Therefore:
    \begin{equation*}
        (i,g,\lambda)(i,1,\lambda)(i,(g^\prime)^{-1},\lambda)(i,p_{\lambda i}^{-1},\nu) \normspace{\rho} (j,h,\mu)(i,1,\lambda)(i,(h^\prime)^{-1},\lambda)(i,p_{\lambda i}^{-1} , \nu).
    \end{equation*}
    Evaluating both sides gives $(i,p_{\nu i}^{-1},\nu) \normspace{\rho} (j,p_{\nu j}^{-1} , \nu)$.
    The proof of (2) is analogous to this.
    For (3) let $k,l \in I$ and $\nu , \omega \in \Lambda$. Then:
    \begin{align*}
        (l,p_{\nu i} g p_{\lambda k} p_{\mu k}^{-1} h^{-1} p_{\nu j}^{-1} , \omega) & = (l,1,\nu)(i,g,\lambda)(k,p_{\mu k}^{-1} h^{-1} p_{\nu j}^{-1},\omega) \\
        & \bigspace{\rho} (l,1,\nu)(j,h,\mu)(k,p_{\mu k}^{-1} h^{-1} p_{\nu j}^{-1},\omega) \\
        & = (l,1,\omega).
    \end{align*}
    Hence $(i,g,\lambda) \normspace{\rho^\prime} (j,h,\mu)$.
    For the reverse containment suppose $(i,g,\lambda) \normspace{\rho^\prime} (j,h,\mu)$.
    It follows from the definition that of $\rho^\prime$:
    \begin{align*}
        (i,p_{\lambda i} g p_{\lambda i}p_{\mu i}^{-1} h^{-1} p_{\lambda j}^{-1},\lambda) & \normspace{\rho} (i,1,\lambda), \\
        (i,p_{\lambda i}^{-1},\lambda) & \normspace{\rho} (i,p_{\mu i}^{-1},\mu), \\
        (i,p_{\lambda i}^{-1},\lambda) & \normspace{\rho} (j,p_{\lambda j}^{-1},\lambda).
    \end{align*}
    Therefore, the following two elements are $\rho$-related:
    \begin{align*}
        (i,p_{\lambda i}^{-1},\lambda)(i,p_{\lambda i}^{-2},\lambda)(i,p_{\lambda i} g p_{\lambda i} p_{\mu i}^{-1} h^{-1} p_{\lambda j}^{-1},\lambda)(j,h,\mu)(i,p_{\lambda i}^{-1},\lambda) = (i,g,\lambda),&&\text{and}\\
        (j,p_{\lambda j}^{-1},\lambda)(i,p_{\lambda i}^{-2},\lambda)(i,1,\lambda)(j,h,\mu)(i,p_{\mu i}^{-1},\mu) = (j,h,\mu).&&
    \end{align*}
    Hence $\rho = \rho^\prime$ and this completes the proof.
\end{proof}
By combining the above lemmas we obtain the following analogue of Theorem \ref{Cong Classification}.
\begin{thm}
    \label{diag classification}
    Let $S = \RM$ be a Rees matrix semigroup over a DSC group $G$.
    The maps $\rho \mapsto (N_\rho , \rho_I,\rho_\Lambda)$ and $(N,\mathcal{S},\mathcal{T}) \mapsto \rho_{N,\mathcal{S},\mathcal{T}}$ are mutual inverses and so give a one to one correspondence between diagonal subsemigroups on a Rees matrix semigroup and the set of linked reflexive triples. \qed
\end{thm}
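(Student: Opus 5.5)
The theorem follows by assembling the four lemmas just proved; all the work is already in them. Fix $S=\RM$ with $G$ a DSC group. Let $\mathbf{D}=\Diag(S)$ and let $\mathbf{L}$ be the set of linked reflexive triples $(N,\mathcal{S},\mathcal{T})$. Define $\Phi\colon \mathbf{D}\to\mathbf{L}$ by $\Phi(\rho)=(N_\rho,\rho_I,\rho_\Lambda)$, and $\Psi\colon\mathbf{L}\to\mathbf{D}$ by $\Psi(N,\mathcal{S},\mathcal{T})=\rho_{N,\mathcal{S},\mathcal{T}}$.

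First I check that both maps land where claimed. That $\Phi(\rho)\in\mathbf{L}$ is the second lemma of this section. This is the step where the DSC hypothesis on $G$ enters: through Lemma \ref{Use of DSC} it gives closure of $N_\rho$ under inverses, so $N_\rho$ is a normal subgroup rather than merely a normal subsemigroup. That $\Psi$ maps into $\mathbf{D}$ is Lemma \ref{linked gives diagsubsemi}.

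Next I verify the two compositions.
\begin{itemize}
\item The lemma stating $N_\rho=N$, $\rho_I=\mathcal{S}$, $\rho_\Lambda=\mathcal{T}$ for $\rho=\rho_{N,\mathcal{S},\mathcal{T}}$ says exactly that $\Phi\circ\Psi=\mathrm{id}_{\mathbf{L}}$. That proof uses the linked hypothesis to simplify the coset condition defining $\rho_I$.
\item The final lemma, that $\rho=\rho_{N_\rho,\rho_I,\rho_\Lambda}$ for every diagonal subsemigroup $\rho$, says exactly that $\Psi\circ\Phi=\mathrm{id}_{\mathbf{D}}$. That proof again uses Lemma \ref{Use of DSC}.
\end{itemize}
Two-sided inverses give a bijection, so the theorem follows.

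No step here is a real obstacle, since the substance lies in the preceding lemmas. The only point to be careful about is the order of application. The final lemma needs $\rho_{N_\rho,\rho_I,\rho_\Lambda}$ to be well defined as an element in the image of $\Psi$, so I would invoke the second lemma first to know that $(N_\rho,\rho_I,\rho_\Lambda)$ is a linked reflexive triple before applying $\Psi$ to it.
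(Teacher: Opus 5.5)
Your proposal is correct and follows the paper's own route: the paper gives no separate proof beyond ``combining the above lemmas,'' and your argument makes that combination explicit. You show that both maps are well defined and that both composites are the identity. Your remarks on where the DSC hypothesis enters are also accurate: it enters through Lemma \ref{Use of DSC}, in the lemma that $(N_\rho,\rho_I,\rho_\Lambda)$ is a linked reflexive triple and in the final lemma.
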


Any finite group is DSC, so the above result certainly applies to finite Rees matrix semigroups, which is the context within which we will work in the next section.

\section{DSC Coefficient for Rees Matrix Semigroups}

For this section let $S = \RM$ be a finite Rees matrix semigroup over a group $G$.
Recall the DSC coefficient of $S$ is defined as:
\begin{equation*}
    \chi(S) = \frac{|\Cong(S)|}{|\Diag(S)|}.
\end{equation*}
As already observed, both Theorems \ref{Cong Classification} and \ref{diag classification} apply in this case, yielding:
\begin{equation}
    \label{form of dsc coeff for RM}
    \chi(\RM) = \frac{\text{number of linked equivalence triples}}{\text{number of linked reflexive triples}}.
\end{equation}

In what follows we will need some easy facts about extracts.
\begin{lemma}
    \label{Extract Facts}
    Let $N$ be a normal subgroup of $G$, $i,j,i_0,\dots,i_n \in I$ and $\lambda,\mu,\lambda_0,\dots, \\ \lambda_n \in \Lambda$ be arbitrary. Then the following hold:
    \begin{enumerate}
        \label{Extract Facts 1}
        \item If $i = j$ or $\lambda = \mu$ then $q_{\lambda \mu i j} = 1 \in N$.
        \label{Extract Facts 2}
        \item If $q_{\lambda \mu i j} \in N$, then $q_{\mu \lambda i j} , q_{\mu \lambda j i} , q_{\lambda \mu j i} \in N$.
        \label{Extract Facts 3}
        \item If $q_{\lambda \mu i_0 i_1}, q_{\lambda \mu i_1 i_2}, \dots , q_{\lambda \mu i_{n-1} i_n} \in N$, then $q_{\lambda \mu i_0 i_n} \in N$.
        \label{Extract Facts 4}
        \item If $q_{\lambda_0 \lambda_1 i j},q_{\lambda_1 \lambda_2 i j} ,\dots , q_{\lambda_{n-1} \lambda_n i j} \in N$ then $q_{\lambda_0 \lambda_n i j} \in N$.
    \end{enumerate}
\end{lemma}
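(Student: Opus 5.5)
All four facts follow from direct computation with the definition $q_{\lambda \mu i j} = p_{\lambda i} p_{\mu i}^{-1} p_{\mu j} p_{\lambda j}^{-1}$. The only tools needed are that $N$ is closed under inverses and products and is invariant under conjugation. Throughout I would work in the quotient group $G/N$, writing $\bar{x}$ for $Nx$. Then $q_{\lambda\mu ij} \in N$ is equivalent to $\bar q_{\lambda\mu ij}=1$.

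For (1), substitute directly. If $i=j$ the expression is $p_{\lambda i}p_{\mu i}^{-1}p_{\mu i}p_{\lambda i}^{-1}=1$. If $\lambda=\mu$ it is $p_{\lambda i}p_{\lambda i}^{-1}p_{\lambda j}p_{\lambda j}^{-1}=1$.

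For (2), I would note two identities.
\begin{itemize}
\item $q_{\lambda\mu ij}^{-1} = p_{\lambda j}p_{\mu j}^{-1}p_{\mu i}p_{\lambda i}^{-1} = q_{\lambda\mu ji}$.
\item $q_{\mu\lambda ij} = p_{\mu i}p_{\lambda i}^{-1}p_{\lambda j}p_{\mu j}^{-1}$ is a conjugate of $q_{\lambda\mu ij}^{-1}$. Indeed, $q_{\lambda\mu ij}^{-1}=p_{\lambda j}p_{\mu j}^{-1}p_{\mu i}p_{\lambda i}^{-1}$, and conjugating by $p_{\lambda i}^{-1}$ gives $p_{\lambda i}^{-1}p_{\lambda j}p_{\mu j}^{-1}p_{\mu i}$. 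This is a cyclic rotation of $p_{\mu i}p_{\lambda i}^{-1}p_{\lambda j}p_{\mu j}^{-1}$, and cyclic rotations of a word are conjugate.
\end{itemize}
Since $N$ is normal and closed under inverses, $q_{\lambda\mu ij}\in N$ gives $q_{\lambda\mu ji}\in N$ and $q_{\mu\lambda ij}\in N$. Applying the same argument once more gives $q_{\mu\lambda ji}\in N$.

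For (3), I would first rewrite membership in $N$ in a form that telescopes. Using normality, $\bar q_{\lambda\mu ij}=1$ is equivalent to $\bar p_{\lambda i}\bar p_{\mu i}^{-1} = \bar p_{\lambda j}\bar p_{\mu j}^{-1}$. Indeed, multiplying $\bar p_{\lambda i}\bar p_{\mu i}^{-1}\bar p_{\mu j}\bar p_{\lambda j}^{-1}=1$ on the right by $\bar p_{\lambda j}\bar p_{\mu j}^{-1}$ gives exactly this equality. Set $f(i)=\bar p_{\lambda i}\bar p_{\mu i}^{-1}$ for fixed $\lambda,\mu$. The hypotheses then say $f(i_0)=f(i_1)=\dots=f(i_n)$, so $f(i_0)=f(i_n)$, which is $q_{\lambda\mu i_0 i_n}\in N$.

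For (4), the same approach does not apply directly, because $\lambda$ and $\mu$ now vary. I would use (2) to swap roles: $q_{\lambda_{k-1}\lambda_k ij}\in N$ gives $q_{\lambda_{k-1}\lambda_k ji}\in N$ and related forms. Then I would rewrite the condition as an equality of a function of $\lambda$. We have $\bar q_{\lambda\mu ij}=1$ if and only if $\bar p_{\mu i}^{-1}\bar p_{\mu j} = \bar p_{\lambda i}^{-1}\bar p_{\lambda j}$. This follows by conjugating $\bar q_{\lambda\mu ij}$ by $\bar p_{\lambda i}$, which gives $\bar p_{\mu i}^{-1}\bar p_{\mu j}\bar p_{\lambda j}^{-1}\bar p_{\lambda i}=1$. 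So with $g(\lambda)=\bar p_{\lambda i}^{-1}\bar p_{\lambda j}$, the hypotheses give a chain of equalities $g(\lambda_0)=g(\lambda_1)=\dots=g(\lambda_n)$, and hence $q_{\lambda_0\lambda_n ij}\in N$.

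The only real care needed is in checking these "function equality" reformulations, which rely on normality of $N$ so that $G/N$ is a group. I expect no genuine obstacle.
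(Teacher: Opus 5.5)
Your proposal is correct and follows essentially the paper's route: (2) via the inverse identity $q_{\lambda\mu ij}^{-1}=q_{\lambda\mu ji}$ plus a conjugation identity, and (3) via the telescoping identity $q_{\lambda\mu i_0i_1}q_{\lambda\mu i_1i_2}=q_{\lambda\mu i_0i_2}$, which you repackage as equality of $f(i)=p_{\lambda i}p_{\mu i}^{-1}$ modulo $N$ (since $q_{\lambda\mu ij}=f(i)f(j)^{-1}$). For (4) you make explicit the conjugation by $p_{\lambda i}$, and hence the use of normality, that the paper's ``analogous to (3)'' leaves implicit; your opening appeal to (2) in that part is never actually used and can be dropped.
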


\begin{proof}
    \begin{enumerate*}
        \item Obvious. 
        \item Follows from $q_{\lambda \mu i j}^{-1} = q_{\lambda \mu j i}$ and $p_{\mu i} p_{\lambda i}^{-1} q_{\lambda \mu i j} p_{\lambda i} p_{\mu i}^{-1} = q_{\mu \lambda j i}$. 
        \item Follows from $q_{\lambda \mu i_0 i_1} q_{\lambda \mu i_1 i_2}= q_{\lambda \mu i_0 i_2} $. 
        \item Analogous to (3).
    \end{enumerate*}
\end{proof}

From this we immediately obtain the following result.

\begin{lemma}
    \label{Linked equivalence}
    The triple $(N,\mathcal{S},\mathcal{T}) $ is linked if and only if the triples $(N,\mathcal{S},\Delta_\Lambda)$ and $(N,\Delta_I,\mathcal{T})$ are linked. \qed 
\end{lemma}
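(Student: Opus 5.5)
The statement is essentially a reformulation of the definition of linkedness, so I will argue directly from the two defining conditions. Recall that $(N,\mathcal{S},\mathcal{T})$ is linked precisely when two separate implications hold for all $i,j \in I$ and $\lambda,\mu \in \Lambda$: the \emph{$I$-condition}, $(i,j)\in\mathcal{S} \implies q_{\lambda\mu ij}\in N$, and the \emph{$\Lambda$-condition}, $(\lambda,\mu)\in\mathcal{T} \implies q_{\lambda\mu ij}\in N$. The $I$-condition involves only $\mathcal{S}$, and the $\Lambda$-condition involves only $\mathcal{T}$. The plan is to show that, for the triples $(N,\mathcal{S},\Delta_\Lambda)$ and $(N,\Delta_I,\mathcal{T})$, one of the two conditions is always automatically satisfied, using Lemma~\ref{Extract Facts}(1).

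For the forward direction, suppose $(N,\mathcal{S},\mathcal{T})$ is linked and consider $(N,\mathcal{S},\Delta_\Lambda)$. Its $I$-condition is literally the $I$-condition of the original triple, so it holds. For its $\Lambda$-condition, take $(\lambda,\mu)\in\Delta_\Lambda$. Then $\lambda=\mu$, and Lemma~\ref{Extract Facts}(1) gives $q_{\lambda\mu ij}=1\in N$ for all $i,j$. Symmetrically, $(N,\Delta_I,\mathcal{T})$ inherits its $\Lambda$-condition from the original triple, while its $I$-condition holds because $(i,j)\in\Delta_I$ forces $i=j$, and then $q_{\lambda\mu ij}=1$.

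For the converse, suppose both $(N,\mathcal{S},\Delta_\Lambda)$ and $(N,\Delta_I,\mathcal{T})$ are linked. The $I$-condition for $(N,\mathcal{S},\mathcal{T})$ is exactly the $I$-condition for the first triple. The $\Lambda$-condition for $(N,\mathcal{S},\mathcal{T})$ is exactly the $\Lambda$-condition for the second triple. Hence $(N,\mathcal{S},\mathcal{T})$ is linked.

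There is no real obstacle here. The only point needing care is to notice that the two linking conditions decouple completely, with no interaction between $\mathcal{S}$ and $\mathcal{T}$, and that each diagonal relation trivially satisfies its condition by Lemma~\ref{Extract Facts}(1). Parts (2)--(4) of that lemma are not needed for this statement; they become relevant later, when counting triples.
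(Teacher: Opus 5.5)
Your proof is correct and matches the paper's implicit argument: the two linking conditions involve $\mathcal{S}$ and $\mathcal{T}$ separately, and the diagonal relations satisfy their conditions trivially by Lemma~\ref{Extract Facts}(1). This is why the paper calls the result immediate from that lemma and omits the proof.
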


If we have a relation $\rho$ on a set $X$, we define $\langle \rho \rangle$ to be the intersection of all equivalence relations on $X$ that contain $\rho$.
So $\langle \rho \rangle$ is the smallest (by inclusion) equivalence relation on $X$ that contains $\rho$.
It is well known that $(x,y) \in \langle \rho \rangle$ if and only if there is a sequence $x = x_0 , x_1 , \dots , x_{n-1},x_n = y$, where for each $i = 0 , 1,\dots,n-1$, either $(x_i,x_{i+1}) \in \rho$ or $(x_{i+1},x_i) \in \rho$.
If $\mathcal{R}$ is a set of relations on $X$ then define $\langle \mathcal{R} \rangle = \langle \bigcup_{\rho \in \mathcal{R}} \rho \rangle$.
It turns out that generating an equivalence relation in this way respects the notion of being linked.

\begin{lemma}
    \label{gen implies linked}
    Let $N$ be a normal subgroup of $G$.
    Suppose we have a set of relations $\mathcal{R}_I$ on $I$ such that for each $\mathcal{S} \in \mathcal{R}_I$ the triple $(N,\mathcal{S},\Delta_\Lambda)$ is linked.
    Then $(N,\langle \mathcal{R}_I \rangle,\Delta_\Lambda) $ is a linked triple.
    Dually if $\mathcal{R}_\Lambda$ is a set of relations on $\Lambda$ such that for each $\mathcal{T} \in \mathcal{R}_\lambda$ the triple $(N,\Delta_I,\mathcal{T})$ is linked, then $(N,\Delta_I,\langle \mathcal{R}_\Lambda\rangle)$ is also linked.
\end{lemma}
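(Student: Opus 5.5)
We verify the linking condition for $\langle \mathcal{R}_I \rangle$ directly, using the chain description of the generated equivalence relation. Note first that the second condition in the definition of a linked triple is trivial for $\Delta_\Lambda$: if $(\lambda,\mu)\in\Delta_\Lambda$ then $\lambda=\mu$, so $q_{\lambda\mu ij}=1\in N$ by Lemma \ref{Extract Facts}(1). Hence $(N,\mathcal{S},\Delta_\Lambda)$ is linked if and only if $q_{\lambda\mu ij}\in N$ for all $(i,j)\in\mathcal{S}$ and all $\lambda,\mu\in\Lambda$. So it suffices to show the following: if $(i,j)\in\langle\mathcal{R}_I\rangle$, then $q_{\lambda\mu ij}\in N$ for every $\lambda,\mu\in\Lambda$.

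Fix $(i,j)\in\langle \mathcal{R}_I\rangle$ and $\lambda,\mu\in\Lambda$. By the characterisation of generated equivalence relations recalled above, there is a sequence $i=i_0,i_1,\dots,i_n=j$ such that for each $t$, either $(i_t,i_{t+1})$ or $(i_{t+1},i_t)$ lies in $\bigcup_{\mathcal{S}\in\mathcal{R}_I}\mathcal{S}$. The case $n=0$ (that is, $i=j$) is handled by Lemma \ref{Extract Facts}(1).

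For each $t$ we check that $q_{\lambda\mu i_t i_{t+1}}\in N$.
\begin{itemize}
\item If $(i_t,i_{t+1})\in\mathcal{S}$ for some $\mathcal{S}\in\mathcal{R}_I$, this is immediate from the linkedness of $(N,\mathcal{S},\Delta_\Lambda)$.
\item If instead $(i_{t+1},i_t)\in\mathcal{S}$, then linkedness gives $q_{\lambda\mu i_{t+1}i_t}\in N$, and Lemma \ref{Extract Facts}(2) yields $q_{\lambda\mu i_t i_{t+1}}\in N$.
\end{itemize}
Lemma \ref{Extract Facts}(3) then combines these along the chain to give $q_{\lambda\mu i_0 i_n}=q_{\lambda\mu ij}\in N$. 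Therefore $(N,\langle\mathcal{R}_I\rangle,\Delta_\Lambda)$ is linked.

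The dual statement is proved the same way. The first linking condition is now trivial because the relation on $I$ is $\Delta_I$. We take a chain $\lambda=\lambda_0,\dots,\lambda_n=\mu$ in $\Lambda$, handle reversed steps with Lemma \ref{Extract Facts}(2) (using $q_{\mu\lambda ij}$), and combine the steps with Lemma \ref{Extract Facts}(4).

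This lemma has no genuine obstacle. The only points needing care are reversed steps in the chain, which are covered by the symmetry in Lemma \ref{Extract Facts}(2), and the trivial chain of length zero.
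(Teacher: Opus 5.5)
Your proof is correct and follows essentially the same route as the paper. The $\Delta_\Lambda$ condition is trivial, and for $(i,j)\in\langle\mathcal{R}_I\rangle$ you take a connecting chain, get each step's extract in $N$ from linkedness (using Lemma \ref{Extract Facts}(2) for reversed steps), and concatenate via part (3), with the dual handled by (2) and (4). You are slightly more explicit than the paper, both about which parts of Lemma \ref{Extract Facts} are used and about the length-zero chain.
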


\begin{proof}
    Let $i,j \in I$ and $\lambda,\mu \in \Lambda$.
    If $(\lambda,\mu) \in \Delta_\Lambda$ then $\lambda = \mu$ and so $q_{\lambda \mu i j} = 1 \in N$.
    Now suppose that $(i,j) \in \langle \mathcal{R}_I \rangle$.
    There is a sequence $i = x_0 , x_1 , \dots , x_n = j$ such that for each $k=0,1, \dots, n-1$, $(x_k,x_{k+1})$ or $(x_{k+1} , x_k) \in \mathcal{S}_k$ for some $\mathcal{S}_k \in \mathcal{R}_I$.
    As the triples $(N,\mathcal{S}_k,\Delta_\Lambda)$ are linked we have that $q_{\lambda \mu x_{k} x_{k+1}}$ or $q_{\lambda \mu x_{k+1} x_k} \in N$.
    It follows from Lemma \ref{Extract Facts} that $q_{\lambda \mu x_0 x_n} = q_{\lambda \mu i j} \in N$.
    Hence the triple $(N,\langle \mathcal{R}_I \rangle,\Delta_\Lambda)$ is linked.
    It follows similarly that $(N,\Delta_I,\langle \mathcal{R}_\Lambda\rangle)$ is also linked.
\end{proof}

As $G$ is finite we can list the normal subgroups of $G$ as $G = N_1 , \dots , N_n = 1$.
Consider the following sets:
\begin{align*}
    E_{I,k} &= \{ \mathcal{S} : (N_k,\mathcal{S},\Delta_\Lambda) \text{ is a linked equivalence triple} \}, \\
    E_{\Lambda,k} &= \{ \mathcal{T} : (N_k,\Delta_I,\mathcal{T}) \text{ is a linked equivalence triple} \}, \\
    R_{I,k} &= \{ \mathcal{S} : (N_k,\mathcal{S},\Delta_\Lambda) \text{ is a linked reflexive triple} \}, \\
    R_{\Lambda,k} &= \{ \mathcal{T} : (N_k,\Delta_I,\mathcal{T}) \text{ is a linked reflexive triple} \}.
\end{align*}

It follows from \eqref{form of dsc coeff for RM} and Lemma \ref{Linked equivalence} that:
\begin{equation}
    \label{DSC coef form for Rees matrix}
    \chi(\RM) = \frac{|E_{I,1}||E_{\Lambda ,1}| + \dots + |E_{I,n}||E_{\Lambda,n}|}{|R_{I,1}||R_{\Lambda ,1}| + \dots +|R_{I,n}||R_{\Lambda,n}|}.
\end{equation}

Thus we will be interested in trying to evaluate the sizes of these sets.
As $N_1 = G$ we have that the triples $(N_1 , \mathcal{S}, \Delta_\Lambda),(N_1,\Delta_I,\mathcal{T})$ are always linked for any $\mathcal{S}$ and $\mathcal{T}$.
So we have that:
\begin{equation}
    \label{N = G case}
    |E_{I,1}| = B(|I|) ,\quad |E_{\Lambda,1}| = B(|\Lambda|),\quad |R_{I,1}| = 2^{|I|^2 - |I|}, \quad|R_{\Lambda,1}| = 2^{|\Lambda|^2 - |\Lambda|}
\end{equation}
and hence:
\begin{equation*}
    \frac{|E_{I,1}||E_{\Lambda,1}|}{|R_{I,1}||R_{\Lambda,1}|} = \frac{B(|I|)B(|\Lambda|)}{2^{|I|^2 - |I|}2^{|\Lambda|^2 - |\Lambda|}}.
\end{equation*}
Here $B(m)$ is the $m^{th}$ bell number, the number of equivalence relations on a set of size $m$, and $2^{m^2 - m}$ is the number of reflexive relations on a set of size $m$.
The number on the right of the equation is the DSC coefficient of the rectangular band $I \times \Lambda$.
It turns out that for fixed $I$ and $\Lambda$ the DSC coefficient of $\RM$ cannot be lower than this.
The main aim of this section is to prove the following result.
\begin{thm}
    \label{main theorem section 3}
    Let $I$ and $\Lambda$ be sets of size $a,b>1$ respectively.
    Then for any group $G$ and $\Lambda \times I$ matrix $P$ with entries from $G$ we have that:
    \begin{equation*}
        \frac{B(a)B(b)}{2^{a^2 - a}2^{b^2-b}} \leq \chi(\RM) < 1.
    \end{equation*}
    Furthermore for any rational $\alpha \in \left[ \frac{B(a)B(b)}{2^{a^2 - a}2^{b^2-b}} , 1\right)$, there is a group $G$ and a matrix $P$ with $\chi(\RM) = \alpha$.
\end{thm}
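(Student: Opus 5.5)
The plan is to exploit the decomposition \eqref{DSC coef form for Rees matrix} and to reduce every summand to a count of relations contained in a single equivalence relation. Fix $k$ and let $\sigma_{I,k}$ be the equivalence relation generated by all $\mathcal{S}$ such that $(N_k,\mathcal{S},\Delta_\Lambda)$ is linked. By Lemma \ref{gen implies linked}, the triple $(N_k,\sigma_{I,k},\Delta_\Lambda)$ is linked. Linkedness is a condition on individual pairs, so a reflexive (resp. equivalence) relation $\mathcal{S}$ lies in $R_{I,k}$ (resp. $E_{I,k}$) exactly when $\mathcal{S}\subseteq\sigma_{I,k}$. Suppose $\sigma_{I,k}$ has classes of sizes $c_1,\dots,c_r$ with $\sum c_t=a$. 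Then
\begin{equation*}
|E_{I,k}|=\prod_t B(c_t), \qquad |R_{I,k}|=\prod_t 2^{c_t^2-c_t}.
\end{equation*}
Dual statements hold for $\Lambda$ with an equivalence relation $\sigma_{\Lambda,k}$.

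\textbf{Lower bound.} Put $f(c)=B(c)/2^{c^2-c}$. I claim $f(c+d)\le f(c)f(d)$, which is equivalent to $B(c+d)\le B(c)B(d)2^{2cd}$. This holds because a partition of a $(c+d)$-set is determined by its restrictions to the two parts together with the set of related cross pairs, which gives the stronger bound $B(c+d)\le B(c)B(d)2^{cd}$. Iterating the claim gives $|E_{I,k}|/|R_{I,k}|\ge f(a)$, and dually $|E_{\Lambda,k}|/|R_{\Lambda,k}|\ge f(b)$. Hence each summand ratio in \eqref{DSC coef form for Rees matrix} is at least $f(a)f(b)$. By the mediant inequality the whole fraction is at least $f(a)f(b)$, which is the lower bound.

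\textbf{Strict upper bound.} Each ratio is at most $1$. The $k=1$ term, by \eqref{N = G case}, has ratio $f(a)f(b)<1$ because $a>1$. So the mediant is strictly below $1$.

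\textbf{Realization.} This is the main obstacle. I aim for a family in which every $\sigma$ is either universal or trivial. Let $K\trianglelefteq G$ be cyclic of prime-power order, and take $P$ with entries in $K$, say $p_{\lambda i}=x^{\alpha_\lambda\beta_i}$ for a generator $x$ of $K$. Here the $\alpha_\lambda$ are pairwise distinct residues and the $\beta_i$ are pairwise distinct residues, modulo a prime $q$ chosen larger than $a$ and $b$, with $K$ a $q$-group. Then every extract with $i\neq j$, $\lambda\neq\mu$ generates $K$.

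Consequently, if $N_k\supseteq K$ then both $\sigma$'s are universal and the term contributes $(E,R)=(B(a)B(b),2^{a^2-a+b^2-b})$. Otherwise both $\sigma$'s are trivial and the term contributes $(1,1)$. Writing $M$ for the number of normal subgroups containing $K$ and $T$ for the rest, this gives
\begin{equation*}
\chi=\frac{ME+T}{MR+T}.
\end{equation*}
For $\alpha=u/v$ we need $T/M=(uR-vE)/(v-u)$, which is an arbitrary positive rational. Here $M=n(G/K)$ and $T=n(G)-n(G/K)$, where $n(\cdot)$ counts normal subgroups, so one must build groups $G\supseteq K$ with $n(G)/n(G/K)$ equal to any prescribed rational greater than $1$.

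Direct products $K\times A$ with coprime orders only give integer ratios $n(G)/n(G/K)=s+1$ for $K=C_{q^s}$. I would therefore first try non-coprime abelian examples, such as $G=C_q\times C_q\times A$ with $K$ one factor, which gives ratio $(q+3)/2$. More generally I would try elementary abelian and mixed examples, counted via Goursat's lemma, to hit arbitrary rationals. If that fails, I would allow intermediate $\sigma$'s, for instance a single linked pair, which contribute further adjustable $(E',R')$ terms so the mediant can be tuned exactly. The case $\alpha=f(a)f(b)$ is the trivial group with all $p_{\lambda i}=1$.
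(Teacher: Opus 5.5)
Your lower bound and strict upper bound are correct and are essentially the paper's argument. The paper gets $\chi<1$ from Theorem~\ref{motivation theorem}, because $S$ is not a group; you compare the $k=1$ summand directly. Both work. Your matrix $p_{\lambda i}=x^{\alpha_\lambda\beta_i}$ is also a genuine simplification of the paper's entries $p^r2^t$ with $p>2^{ab+1}$. Its extract is $x^{(\alpha_\lambda-\alpha_\mu)(\beta_i-\beta_j)}$, which generates $K$ as soon as $q\geq\max(a,b)$, with no size estimates. The gap is in the realization step. You reduce it to finding groups $G\supseteq K$ with a prescribed value of $n(G)/n(G/K)$, but then only list attempts (coprime products, $C_q\times C_q\times A$, Goursat's lemma, ``intermediate $\sigma$'s''). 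None of the examples you list produces all rationals, and the fallback is not specified. As written, the ``Furthermore'' clause is not proved.

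The missing observation is that $K$ need not be all of $G$ or a direct factor. Take $G=C_{q^k}$ and let $K$ be its subgroup of order $q^{k-r}$ for some $0\le r\le k$, with $x$ a generator of $K$. Then $M=n(G/K)=n(C_{q^r})=r+1$ and $T=(k+1)-(r+1)=k-r$, so $M\in\N$ and $T\in\N_0$ can be prescribed independently. You also do not need $T/M$ as a reduced fraction. Write $E=B(a)B(b)$, $R=2^{a^2-a+b^2-b}$ and $\alpha=u/v\in[E/R,1)$, and take $M=v-u\geq 1$ and $T=uR-vE\geq 0$, that is, $r=v-u-1$ and $k=r+uR-vE$. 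Then $(ME+T)/(MR+T)=u(R-E)/\bigl(v(R-E)\bigr)=\alpha$. This is exactly the paper's construction: $G=\Z_{p^k}$ with entries in $p^r\Z_{p^k}$, $c=r+1$, $d=k-r$, $c=\gamma-\beta$ and $d=\beta 2^{a^2-a}2^{b^2-b}-\gamma B(a)B(b)$. It also covers $\alpha=E/R$ (the case $T=0$) without a separate argument.
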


\begin{remark}
    \label{index sets have size 1 case}
    Let us pause to observe what happens when $I$ or $\Lambda$ has size $1$.
    First assume that $|I| = 1$. For any relation $\mathcal{T}$ on $\Lambda$ and normal subgroup $N$ of $G$ we have that $(N,\Delta_I,\mathcal{T})$ is a linked triple.
    This is because every extract is equal to $1$.
    So as $n$ is the number of normal subgroups of $G$, we have that:
    \begin{equation*}
        \chi(\RM) = \frac{n B(|\Lambda|)}{n 2^{|\Lambda|^2 - |\Lambda|} } = \frac{ B(|\Lambda|)}{ 2^{|\Lambda|^2 - |\Lambda|}}
    \end{equation*}
    An analogous result holds when $|\Lambda| = 1$.
\end{remark}

To prove Theorem \ref{main theorem section 3} we will require the following two auxiliary results. The first of which is obvious:
\begin{lemma}
    \label{elementary fraction result}
    Let $a_1 , \dots , a_n , b_1 , \dots , b_n \in \N$, with $\frac{a_1}{b_1} \leq \frac{a_i}{b_i}$ for all $i$.
    Then
    \begin{align*}
        \frac{a_1}{b_1} \leq \frac{a_1 + a_2 + \dots + a_n}{b_1 + b_2 + \dots + b_n}. \tag*{\qed}
    \end{align*}
\end{lemma}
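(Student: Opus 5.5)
Lemma \ref{elementary fraction result} asks for a proof of an elementary inequality about fractions, the mediant inequality. I will argue directly by clearing denominators. The hypothesis $\frac{a_1}{b_1} \leq \frac{a_i}{b_i}$ with all $b_i \in \N$ positive is equivalent to
\begin{equation*}
    a_1 b_i \leq a_i b_1 \quad \text{for every } i = 1,\dots,n.
\end{equation*}
Summing these $n$ inequalities over $i$ gives
\begin{equation*}
    a_1 (b_1 + \dots + b_n) \leq b_1 (a_1 + \dots + a_n).
\end{equation*}
Both $b_1$ and $b_1 + \dots + b_n$ are positive, so dividing by their product yields the claimed inequality.

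No step here is a real obstacle. The only point to check is that every denominator is strictly positive, which holds because the $b_i$ lie in $\N = \{1,2,3,\dots\}$. This guarantees that cross-multiplying preserves the direction of each inequality. The numerators may be any values in $\N$; in fact the argument works for arbitrary reals $a_i$.

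In the paper, the lemma will presumably be applied to the expression \eqref{DSC coef form for Rees matrix}, taking $a_k = |E_{I,k}||E_{\Lambda,k}|$ and $b_k = |R_{I,k}||R_{\Lambda,k}|$. For that use the term $k=1$ has to be the minimal ratio. I expect showing this to be the substantive issue in the surrounding argument, rather than in this lemma itself.
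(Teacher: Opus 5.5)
Your proof is correct. Clearing the positive denominators, summing $a_1 b_i \leq a_i b_1$ over $i$, and dividing by $b_1(b_1+\dots+b_n)>0$ is exactly the routine verification the paper omits when it calls the lemma obvious. You also read its later use correctly: it is applied to \eqref{DSC coef form for Rees matrix}, and the substantive work is showing the $k=1$ term is minimal, which the paper does via Lemma \ref{Bell number inequality}.
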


\begin{lemma}
    \label{Bell number inequality}
    Let $n , r , k_1 , \dots , k_r \in \N$ be such that $n = k_1 + \dots + k_r$.
    Then:
    \begin{equation*}
        \frac{B(n)}{2^{n^2 - n}} \leq \frac{B(k_1)}{2^{k_1^2 - k_1}} \frac{B(k_2)}{2^{k_2^2 - k_2}} \cdots \frac{B(k_r)}{2^{k_r^2 - k_r}}.
    \end{equation*}
\end{lemma}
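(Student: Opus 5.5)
The plan is to reduce to the case $r=2$ and then induct on $r$. For $r=1$ there is nothing to prove. Suppose the statement holds for $r-1$ parts. Put $m = k_1+\dots+k_{r-1}$, so that $n = m + k_r$. The case $r=2$ gives
\[
\frac{B(n)}{2^{n^2-n}} \le \frac{B(m)}{2^{m^2-m}}\cdot\frac{B(k_r)}{2^{k_r^2-k_r}},
\]
and the induction hypothesis applied to $m$ bounds the first factor. So the whole content is the two-part inequality with $n = k+l$ and $k,l\ge 1$.

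Next I would rewrite the two-part inequality. We have
\[
n^2-n = (k^2-k)+(l^2-l)+2kl,
\]
so the claim is equivalent to
\[
B(k+l) \le 2^{2kl}\, B(k)\, B(l).
\]
I would prove this combinatorially, by giving an injection into a set of size $2^{2kl}B(k)B(l)$. Split the ground set $X$ of size $k+l$ into disjoint parts $K$ and $L$ with $|K|=k$ and $|L|=l$.

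An equivalence relation $\varepsilon$ on $X$ restricts to an equivalence relation $\varepsilon|_K$ on $K$ and an equivalence relation $\varepsilon|_L$ on $L$. This accounts for at most $B(k)B(l)$ possibilities. The remaining information in $\varepsilon$ is the set of pairs $\varepsilon\cap(K\times L)$. This is a subset of $K\times L$, so there are at most $2^{kl}$ choices for it. By symmetry of $\varepsilon$, these pairs also determine $\varepsilon\cap(L\times K)$.

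Once $\varepsilon|_K$, $\varepsilon|_L$ and $\varepsilon\cap(K\times L)$ are known, every pair of $X\times X$ is accounted for. Hence $\varepsilon$ is uniquely determined by the triple. This gives an injection, and therefore
\[
B(k+l)\le 2^{kl}B(k)B(l)\le 2^{2kl}B(k)B(l).
\]
Equivalently, one can simply note that every equivalence relation is reflexive and symmetric and count symmetric cross pairs directly.

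The argument has lots of slack: we only needed the factor $2^{kl}$, and we have $2^{2kl}$ available. So I do not expect a real obstacle. The only points needing care are, first, checking the exponent identity $n^2-n=\sum(k_i^2-k_i)+2\sum_{i<j}k_ik_j$ if one prefers to do all $r$ parts at once rather than inducting, and second, making sure the restriction map really is injective. The injectivity holds because $X\times X$ is the disjoint union of $K\times K$, $L\times L$, $K\times L$ and $L\times K$.

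Alternatively, to avoid induction, I would partition $X$ into blocks $K_1,\dots,K_r$ of sizes $k_i$. An equivalence relation is then determined by its $r$ restrictions together with its intersections with $K_i\times K_j$ for $i<j$. This gives
\[
B(n)\le \prod_i B(k_i)\cdot 2^{\sum_{i<j}k_ik_j},
\]
which is stronger than what is required.
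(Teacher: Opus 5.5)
Your proposal is correct and follows essentially the same route as the paper. The paper also proves $B(s+t)\le B(s)B(t)2^{st}$ by splitting an equivalence relation into its restrictions to the two parts plus the set of cross pairs, observes that the weaker factor $4^{st}$ is all that is needed, and then inducts on $r$. Your non-inductive variant with blocks $K_1,\dots,K_r$ is a harmless strengthening of the same idea.
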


\begin{proof}
    First we will show that:
    \begin{equation*}
        B(s+t) \leq B(s) B(t) 2^{st}.
    \end{equation*}
    Let $X$ be a set of size $s+t$ and let $S,T$ be a partition of $X$ into sets of size $s$ and $t$ respectively.
    Given an equivalence relation $\rho$ on $X$ we obtain equivalence relations $\rho_S = \rho \cap S \times S , \rho_T = \rho \cap T \times T$ on $S$ and $T$ respectively.
    We can also obtain a subset of $S \times T$ from $\rho$:
    \begin{equation*}
        Y_\rho = \{ (s,t): s \in S , t \in T , (s,t) \in \rho \}.
    \end{equation*}
    Clearly $\rho = \rho_S \cup \rho_T \cup Y_\rho \cup Y_\rho^{-1}$, so any equivalence relation on $X$ can be constructed from an equivalence on $S$, an equivalence on $T$ and a subset of $S \times T$.
    There is at most $B(s) B(t) 2^{st}$ such triples, so we have $B(s+t) \leq B(s) B(t) 2^{st}$.
    We will actually only require the weaker result:
    \begin{equation*}
        B(s+t) \leq B(s) B(t) 4^{st}.
    \end{equation*}
    To prove the main result we will use induction on $r$.
    The base case ($r=1$) is clear.
    Suppose that:
    \begin{equation*}
        \frac{B(k_1 + \dots + k_{r-1})}{2^{(k_1 + \dots + k_{r-1})^2 - (k_1 + \dots + k_{r-1})}} \leq \frac{B(k_1)}{2^{k_1^2 - k_1}}  \cdots \frac{B(k_{r-1})}{2^{k_{r-1}^2 - k_{r-1}}}.
    \end{equation*}
    From above we have that $B(n-k_r) B(k_r) \geq B(n) 4^{-k_r(n-k_r)}$, and so:
    \begin{align*}
        \frac{B(k_1)}{2^{k_1^2 - k_1}}  \dots \frac{B(k_{r})}{2^{k_{r}^2 - k_{r}}}  
        & \geq \frac{B(k_1 + \dots + k_{r-1})}{2^{(k_1 + \dots + k_{r-1})^2 - (k_1 + \dots + k_{r-1})}} \frac{B(k_{r})}{2^{k_{r}^2 - k_{r}}} \\
        &= \frac{B(n - k_r)}{2^{(n-k_r)^2 - (n-k_r)}} \frac{B(k_{r})}{2^{k_{r}^2 - k_{r}}} \\
        & \geq \frac{B(n) 4^{-k_r(n-k_r)}}{2^{(n-k_r)^2 - (n-k_r)}2^{k_{r}^2 - k_{r}}} = \frac{B(n)}{2^{n^2 - n}}
    \end{align*}
    as required.
\end{proof}

\begin{proof}[Proof of Theorem \ref{main theorem section 3}.]
    Recall the sets:
    \begin{align*}
    E_{I,k} &= \{ \mathcal{S} : (N_k,\mathcal{S},\Delta_\Lambda) \text{ is a linked equivalence triple} \}, \\
    E_{\Lambda,k} &= \{ \mathcal{T} : (N_k,\Delta_I,\mathcal{T}) \text{ is a linked equivalence triple} \}, \\
    R_{I,k} &= \{ \mathcal{S} : (N_k,\mathcal{S},\Delta_\Lambda) \text{ is a linked reflexive triple} \}, \\
    R_{\Lambda,k} &= \{ \mathcal{T} : (N_k,\Delta_I,\mathcal{T}) \text{ is a linked reflexive triple} \}.
    \end{align*}
    Let $\sigma_k = \langle E_{I,k} \rangle$. By Lemma \ref{gen implies linked} we have that $\sigma_k \in E_{I,k}$.
    The equivalence $\sigma_k$ contains every element or $E_{I,k}$ and it is easy to see that for any equivalence $\sigma^\prime \subseteq \sigma_k$ the triple $(N_k,\sigma^\prime,\Delta_\Lambda)$ is linked. 
    And so we have the following:
    \begin{equation*}
        E_{I,k} = \{ \mathcal{S} \subseteq \sigma_k : \mathcal{S} \text{ is an equivalence relation} \}.
    \end{equation*}
    Now consider $\langle R_{I,k} \rangle$. Clearly $\sigma_k \in R_{I,k}$ and so $\sigma_k \subseteq \langle R_{I,k} \rangle$.
    Furthermore, for any $\mathcal{S} \in R_{I,k}$ we have $\mathcal{S} \subseteq \langle \mathcal{S} \rangle \subseteq \langle E_{I,k} \rangle = \sigma_k$.
    Hence $\sigma_k = \langle R_{I,k} \rangle$ and:
    \begin{equation*}
        R_{I,k} = \{\mathcal{S} \subseteq \sigma_k : \mathcal{S} \text{ is a reflexive relation} \}.
    \end{equation*}
    Let $I_1 , \dots , I_l$ be the equivalence classes of $\sigma_k$, with sizes $a_1 ,\ldots ,a_l$ respectively.
    Any equivalence relation on $I$ contained in $\sigma_k$ is just a union of equivalence relations on $I_1 , \dots ,I_l$.
    As such there are $B(a_1) \dots B(a_l)$ such relations.
    And similarly any reflexive relation on $X$ contained in $\rho$ is a union of reflexive relations on $I_1 , \dots , I_l$.
    And so there is $2^{a_1^2 - a_1} \dots 2^{a_l^2 - a_l}$ such relations.
    Therefore:
    \begin{equation}
        \label{I fraction form}
        \frac{|E_{I,k}|}{|R_{I,k}|} = \frac{B(a_1) \dots B(a_l)}{2^{a_1^2 - a_1} \dots 2^{a_l^2 - a_l}}
    \end{equation}
    where $a_1 + \dots + a_l = a$.
    Dually we have relations $\tau_k$ on $\Lambda$ such that:
    \begin{align*}
        E_{\Lambda,k} & = \{\mathcal{T} \subseteq \tau_k : \mathcal{T} \text{ is an equivalence relation} \}, \\
        R_{\Lambda,k} & = \{\mathcal{T} \subseteq \tau_k : \mathcal{T} \text{ is a reflexive relation} \}.
    \end{align*}
    Let $\Lambda_1 , \ldots , \Lambda_m$ be the equivalence classes of $\tau_k$, with sizes $b_1 , \ldots , b_m$.
    Then:
    \begin{equation}
        \label{Lambda fraction form}
        \frac{|E_{\Lambda,k}|} {|R_{\Lambda,k}|} = \frac{B(b_1) \dots B(b_m)}{2^{b_1^2 - b_1} \dots 2^{b_m^2 - b_m}}
    \end{equation}
    where $b_1 + \dots + b_m = b$.
    By \eqref{N = G case}, \eqref{I fraction form}, \eqref{Lambda fraction form} and Lemma \ref{Bell number inequality} we have:
    \begin{equation*}
        \frac{|E_{I,1}||E_{\Lambda,1}| }{|R_{I,1}||R_{\Lambda,1}|} = \frac{B(a)B(b)}{2^{a^2 - a}2^{b^2 - b}} \leq \frac{B(a_1) \dots B(a_l)}{2^{a_1^2 - a_1} \dots 2^{a_l^2 - a_l}} \frac{B(b_1) \dots B(b_m)}{2^{b_1^2 - b_1} \dots 2^{b_m^2 - b_m}} = \frac{|E_{I,k}||E_{\Lambda,k}|}{|R_{I,k}||R_{\Lambda,k}|}
    \end{equation*}
    for each $k = 1 \dots , n$.
    Finally by \eqref{DSC coef form for Rees matrix} and Lemma \ref{elementary fraction result} we get:
    \begin{equation*}
        \frac{B(a) B(b)}{2^{a^2 - a}2^{b^2-b}} = \frac{|E_{I,1}||E_{\Lambda,1}| }{|R_{I,1}||R_{\Lambda,1}|} \leq  \frac{|E_{I,1}||E_{\Lambda ,1}| + \dots + |E_{I,n}||E_{\Lambda,n}|}{|R_{I,1}||R_{\Lambda ,1}| + \dots +|R_{I,n}||R_{\Lambda,n}|} =  \chi(S).
    \end{equation*}
    As $a,b >1$ it follows that $S$ is not a group, so by Theorem \ref{motivation theorem} $\chi(S) < 1$.
    This completes the proof of the the first statement in the theorem.

    For the second part we want to show that for any rational $\alpha \in \left[\frac{B(a)B(b)}{2^{a^2 - a}2^{b^2 - b}},1\right)$, there is a group $G$ and a matrix $P$ with $\chi(\RM) = \alpha$. 
    We saw in the first part that:
    \begin{equation*}
        \frac{|E_{I,k}|}{|R_{I,k}|} = \frac{B(a_1)\dots B(a_l)}{2^{a_1^2 - a_1}\dots2^{a_l^2 - a_l}} 
    \end{equation*}
    where $a_1 , \dots , a_l$ are the sizes of the equivalence classes of $\sigma_k$.
    Suppose that for some $k$ we have $\sigma_k = I \times I$; this is certainly the case when $k = 1$, as then $N_k = G$.
    In this situation we have:
    \begin{equation*}
        \frac{|E_{I,k}|}{|R_{I,k}|} = \frac{B(a)}{2^{a^2 - a}}.
    \end{equation*}
    At the other extreme we might have $\sigma_k = \Delta_I$ for some $k$; this case may or may not happen for a particular Rees matrix semigroup.
    If it does, then:
    \begin{equation*}
        \frac{|E_{I,k}|}{|R_{I,k}|}= \frac{B(1) \dots B(1)}{2^{1^2 - 1} \dots 2^{1^2 - 1}} = 1.
    \end{equation*}
    Analogous results hold for $\frac{|E_{\Lambda,k}|}{|R_{\Lambda,k}|}$.
    
    We shall show that there exists a group $G$ and $\Lambda \times I$ matrix $P$ with entries from $G$ such that:
    \begin{itemize}
        \item $ \frac{|E_{I,k}|}{|R_{I,k}|}$ can only take values $\frac{B(a)}{2^{a^2 -a}}$ or $1$,
        \item $\frac{|E_{\Lambda,k}|}{|R_{\Lambda,k}|}$ can only take values $\frac{B(b)}{2^{b^2 -b}}$ or $1$,
        \item if there are $c$ values of $k$ for which $\frac{|E_{I,k}|}{|R_{I,k}|} = \frac{B(a)}{2^{a^2 -a}}$ then $\frac{|E_{\Lambda,k}|}{|R_{\Lambda,k}|} = \frac{B(b)}{2^{b^2 - b}}$ for the same $c$ values,
        \item if there are $d$ values of $k$ for which $\frac{|E_{I,k}|}{|R_{I,k}|} = 1$ then $\frac{|E_{\Lambda,k}|}{|R_{\Lambda,k}|} = 1$ for the same $d$ values.
    \end{itemize}
    It will then follow from \eqref{DSC coef form for Rees matrix} that we will be able to write the DSC coefficient of this Rees matrix semigroup as:
    \begin{equation*}
        \chi (\RM) = \frac{cB(a)B(b) + d}{c 2^{a^2 -a}2^{b^2 - b}+d}.
    \end{equation*}
    Furthermore we will show that we can obtain any $c \in \N$ and $d \in \N_0$.

    To this end let $p$ be a prime greater than $2^{ab+1} $ and let $G = \Z_{p^k}$, the cyclic group of order $p^k$, for some arbitrary $k \in \N_0$.
    The normal subgroups of $G$ form the chain:
    \begin{equation*}
    \textbf{1} = p^k \Z_{p^k} \leq p^{k-1} \Z_{p^k} \leq \dots \leq p \Z_{p^k} \leq \Z_{p^k} = G.
    \end{equation*}
    Let $r \in [0,k]$ be arbitrary and consider the $ab$ numbers:
    \begin{equation*}
        p^r,2p^r, 2^2 p^r, \dots,2^{ab-2} p^r , 2^{ab-1} p^r.
    \end{equation*}
    Let $P$ be any $\Lambda \times I$ matrix with all of these numbers as its entries.
    We will show that any extract $q_{\lambda \mu i j}$ with $\lambda \neq \mu$ and $i \neq j$ is a generator for $p^r \Z_{p^k}$. 
    If we consider the entries of the matrix as elements of $\Z$ this is equivalent to showing that $p^r$ divides $q_{\lambda \mu i j}$ and that $p^{r+1}$ does not divide $q_{\lambda \mu i j }$. Let:
    \begin{equation*}
        p_{\lambda i} = p^r 2^s, \quad p_{\mu i} = p^r 2^t, \quad p_{\mu j} = p^r 2^u, \quad p_{\lambda j } = p^r 2^v.
    \end{equation*}
    Then by definition $q_{\lambda \mu i j} = p^r(2^s - 2^t + 2^u - 2^v)$, and so $p^r$ divides $q_{\lambda \mu i j}$. 
    Note that:
    \begin{align*}
        | q_{\lambda \mu i j}| & = p^r| 2^s - 2^t +2^u -2^v| \\
        & \leq  p^r( 2^s + 2^t +2^u +2^v) \\
        & \leq p^r(2^{ab - 1} + 2^{ab - 1} + 2^{ab - 1} + 2^{ab - 1}) = p^r2^{ab+1} < p^{r+1}.
    \end{align*}
    As each element of $P$ is distinct we have that $s,t,u$ and $v$ are distinct, without loss of generality let $s$ be the largest of these.
    Then:
    \begin{align*}
        | q_{\lambda \mu i j}| & = p^r|2^s - 2^t + 2^u -2^v|\\
        & \geq p^r(2^s - (2^t +2^u + 2^v)) \\
        & \geq p^r(2^s - (2^{s-1} + 2^{s-2} + 2^{s-3})) \\
        & > p^r(2^s - (2^{s-1} + 2^{s-2} + 2^{s-2})) = 0.
    \end{align*}
    If follows from $0 < |q_{\lambda \mu i j}| < p^{r+1}$, that $p^{r+1}$ does not divide $q_{\lambda \mu i j}$.
    Hence $q_{\lambda \mu i j}$ is a generator for $p^r\Z_{p^k}$ for any distinct $\lambda ,\mu \in \Lambda$ and $i,j \in I$.
    
    If $N$ is a normal subgroup containing $p^r \Z_{p^k}$, then $q_{\lambda \mu i j} \in N$ for any $\lambda,\mu \in \Lambda$ and $i,j \in I$.
    So for any relations $\mathcal{S}$ on $I$ and $\mathcal{T}$ on $\Lambda$ the triple $(N,\mathcal{S},\mathcal{T})$ is linked.
    There are $r+1$ such subgroups.
    Any other normal subgroup $N$ is properly contained in $p^r \Z_{p^k}$ and so does not contain any generator for $p^r\Z_{p^k}$.
    The only extracts $q_{\lambda \mu i j}$ that are not generators for $p^r \Z_{p^k}$ have either $\lambda = \mu$ or $i =j$.
    It follows that $(N,\mathcal{S}, \mathcal{T})$ is a linked triple if and only if $\mathcal{S} = \Delta_I$ and $\mathcal{T} = \Delta_\Lambda$.
    There is $k-r$ such subgroups.
    We finally have that:
    \begin{equation}
        \label{rees matrix construction}
        \chi(S) = \frac{(r+1) B(a)B(b) + k-r}{(r+1)2^{a^2 - a} 2^{b^2 - b} + k - r}.
    \end{equation}
    For any $c \in \N$ and $d \in \N_0$ we can let $k = c+d - 1 \in \N_0$ and $r = c - 1 \leq k$, $\chi(S)$ now becomes:
    \begin{equation*}
        \frac{cB(a)B(b)+ d}{c 2^{a^2 - a} 2^{b^2 - b} + d}
    \end{equation*}
    as claimed.
    To complete the proof let $\beta ,\gamma \in \N$ with $\alpha = \frac{\beta}{\gamma}\in \left[ \frac{B(a)B(b)}{2^{a^2 - a}2^{b^2 - b}} , 1\right) $.
    Letting $c = \gamma - \beta \in \N$ and $d = \beta 2^{a^2 - a} 2^{b^2 - b} - \gamma B(a) B(b) \in \N_0$
    gives $\chi(S) = \alpha$.
\end{proof}

We can now return to a theorem we stated in the introduction about the spectrum of values we can get for the DSC coefficient.
 
\begin{reptheorem}{\ref{Main Theorem}}
    For any $\alpha \in \Q \cap (0,1]$ there is a finite semigroup $S$ with $\chi(S) = \alpha$.
\end{reptheorem}

\begin{proof}
    If $\alpha = 1$, then take $S$ to be any finite group, otherwise $\alpha < 1$. 
    The number of reflexive and symmetric relations on a set of size $n$ is $2^{\frac{n^2 - n}{2}}$.
    As every equivalence relation is reflexive and symmetric, we have that $B(n) \leq 2^{\frac{n^2 - n}{2}}$.
    Dividing by $2^{n^2 - n}$ and letting $n \rightarrow \infty$ gives:
    \begin{equation*}
        \frac{B(n)}{2^{n^2-n}} \leq 2^{-\frac{n^2-n}{2}} \rightarrow 0.
    \end{equation*}
    So for any $\alpha \in \Q \cap (0,1)$ there is $a,b \in \N \setminus \{1 \}$ such that $\alpha \in \left[ \frac{B(a)B(b)}{2^{a^2 - a}2^{b^2 - b}} , 1\right)$.
    By Theorem \ref{main theorem section 3} there is a finite semigroup $S$ with $\chi(S) = \alpha$.
\end{proof}

\section{Conclusion and Further Questions}

The DSC coefficient associates a number to a semigroup, so it natural to ask whether every number can be obtained in this way. 
Theorem \ref{Main Theorem} gives a complete characterization of the numbers we can obtain.
We have done this by analyzing the DSC coefficient for completely simple semigroups, It is natural to try and do the same for some other classes of semigroups.
Clifford semigroups have a natural description of their congruences so we will conclude with some observations about the DSC coefficient for them.

Clifford semigroups were originally introduced by Clifford \cite{Clifford1941} and were used to give a structural description of regular semigroups where the idempotents are central.
Here we follow the account from \cite[Section 4.2]{Ho95}.
Suppose we have a semilattice $Y$, a collection of pairwise disjoint groups $\{G_\alpha : \alpha \in Y \}$ indexed by $Y$ and for each $\alpha \geq \beta$ a homomorphism  $\phi_{\alpha ,\beta}: G_\alpha \to G_\beta$ such that:
\begin{itemize}
    \item $\phi_{\alpha , \alpha} $ is the identity map on $G_\alpha$,
    \item $\phi_{\alpha,\beta} \phi_{\beta,\gamma} = \phi_{\alpha,\gamma}, \forall \alpha,\beta,\gamma \in Y$ with $\alpha \geq \beta \geq \gamma$.
\end{itemize}
We define the Clifford semigroup $S = \mathcal{S}[Y;G_\alpha ; \phi_{\alpha,\beta}]$ to be the set $\bigcup_{\alpha \in Y}G_{\alpha}$, with multiplication:
\begin{equation*}
    x \cdot y = (x\phi_{\alpha,\alpha \beta})( y \phi_{\beta, \alpha \beta})
\end{equation*}
where $x \in G_{\alpha} , y \in G_{\beta}$ and $x\phi_{\alpha,\alpha \beta} y \phi_{\beta, \alpha \beta}$ is calculated in $G_{\alpha \beta}$.
Let $1_\alpha$ be the identity element of $G_\alpha$ and $E = \{ 1_\alpha : \alpha \in Y \}$ be the collection of idempotents in $S$.
Then $E$ is a subsemigroup of $S$ that is isomorphic to the semilattice $Y$ via the map $1_\alpha \mapsto \alpha$.

Clifford semigroups are a special class of inverse semigroups. 
The congruences on an inverse semigroup were originally described by Scheilblich \cite{Scheiblich_1974} and later developed by Green \cite{MR390093} and Petrich \cite{PETRICH1978231}.
We will use the same language and notation as \cite[Section 5.3]{Ho95}, only adapted to Clifford semigroups.
The congruences on a Clifford semigroup are described by what is called a congruence pair.
For each $\alpha \in Y$ we take a normal subgroup $N_\alpha$ of $G_\alpha$ such that $N_\alpha \phi_{\alpha ,\beta} \subseteq N_\beta$ for all $\alpha \geq \beta$ and let $N = \bigcup_{\alpha \in Y}N_\alpha$, we refer to $N$ as a \textit{kernel}.
Let $\tau$ be a congruence on $E$, which we will call a \textit{trace}.
We say $(N,\tau) $ is \textit{congruence pair} if for $\alpha \geq \beta$ and $x \in G_\alpha$ we have:
\begin{equation}
    \label{lab5}
    x \phi_{\alpha , \beta} \in N \text{ and } (1_\alpha , 1_\beta) \in \tau \implies x \in N.
\end{equation}
Given a congruence pair we can construct the following congruence:
\begin{equation*}
    \rho_{N,\tau } = \{(x,y) : (xx^{-1},yy^{-1}) \in \tau, xy^{-1} \in N \}.
\end{equation*}
Every congruence on a Clifford semigroup can be constructed in this way and each congruence pair gives rise to a unique congruence.
If $S$ is finite, then the number of kernels, $K$, and the number of traces, $|\Cong(Y)|$, are both finite.
Thus $K |\Cong(Y)|$ is an upper bound for $|\Cong(S)|$.

We can deploy a similar construction to obtain a nice family of diagonal subsemigroups on a Clifford semigroup.
We define a diagonal subsemigroup pair in similar way to a congruence pair.
The kernel is defined in the exact same way and the trace $\tau$ is now a diagonal subsemigroup on $E$.
We say the pair $(N ,\tau)$ is a \textit{diagonal subsemigroup pair}, note we do not require $N$ and $\tau$ to satisfy (\ref{lab5}).
We construct $\rho_{N,\tau}$ in the same way as above.

\begin{lemma}
    \label{clifford diagonal subsemigroup family}
    The relation $\rho_{N,\tau}$ is a diagonal subsemigroup.
    Furthermore the map $(N,\tau) \mapsto \rho_{N,\tau}$ from the set of congruence pairs to the set of diagonal subsemigroups is an injection.
\end{lemma}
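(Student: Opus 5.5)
The plan is to verify the two defining properties of a diagonal subsemigroup directly from the definition of $\rho_{N,\tau}$. Injectivity will then follow by recovering $N$ and $\tau$ from $\rho_{N,\tau}$. Throughout I use the standard facts about Clifford semigroups: idempotents are central, for $x \in G_\alpha$ we have $xx^{-1} = x^{-1}x = 1_\alpha$, and $G_\alpha G_\beta \subseteq G_{\alpha\beta}$.

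\emph{Reflexivity.} For $x \in G_\alpha$ we have $xx^{-1} = 1_\alpha$, and $(1_\alpha,1_\alpha) \in \tau$ because $\tau$ is reflexive on $E$. Also $xx^{-1} = 1_\alpha \in N_\alpha \subseteq N$. Hence $(x,x) \in \rho_{N,\tau}$.

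\emph{Compatibility.} Suppose $(x,y),(z,t) \in \rho_{N,\tau}$.

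For the trace condition, centrality of idempotents gives $(xz)(xz)^{-1} = xx^{-1}zz^{-1}$, and similarly $(yt)(yt)^{-1} = yy^{-1}tt^{-1}$. Since $\tau$ is a subsemigroup of $E\times E$, the pairs $(xx^{-1},yy^{-1})$ and $(zz^{-1},tt^{-1})$ in $\tau$ give $(xx^{-1}zz^{-1},\, yy^{-1}tt^{-1}) \in \tau$.

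For the kernel condition I must show $xz(yt)^{-1} = x(zt^{-1})y^{-1} \in N$. I would first establish two closure properties of $N$.
\begin{enumerate}
\item $N$ is a subsemigroup. If $n \in N_\alpha$ and $m \in N_\beta$, then $nm = (n\phi_{\alpha,\alpha\beta})(m\phi_{\beta,\alpha\beta})$, and both factors lie in $N_{\alpha\beta}$ because $N_\alpha\phi_{\alpha,\alpha\beta} \subseteq N_{\alpha\beta}$.
\item $N$ is closed under conjugation. If $a \in G_\alpha$ and $n \in N_\beta$, then $ana^{-1}$ equals $(a\phi)(n\phi)(a\phi)^{-1}$ computed in $G_{\alpha\beta}$. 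It therefore lies in $N_{\alpha\beta}$ by normality of $N_{\alpha\beta}$ in $G_{\alpha\beta}$.
\end{enumerate}

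With these, write $x(zt^{-1})y^{-1} = \bigl(x(zt^{-1})x^{-1}\bigr)(xy^{-1})$. This identity holds because $x^{-1}x = 1_\alpha$ is central and $x(zt^{-1})y^{-1}$ lies in some $G_\gamma$ with $\gamma \le \alpha$, on which multiplication by $1_\alpha$ acts trivially. The first factor is in $N$ by closure under conjugation, since $zt^{-1} \in N$. The second factor $xy^{-1}$ is in $N$ by hypothesis. The product is then in $N$ because $N$ is a subsemigroup. I expect this bookkeeping step, pinning down exactly which group component each element lives in so that the inserted idempotent is harmless, to be the only real obstacle. Note that this part uses neither the congruence-pair condition \eqref{lab5} nor any symmetry of $\tau$.

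\emph{Injectivity.} I would show that both components of the pair can be read off from $\rho = \rho_{N,\tau}$; this in fact works for all diagonal subsemigroup pairs, not only congruence pairs.
\begin{enumerate}
\item The trace is $\tau = \rho \cap (E\times E)$. For idempotents $e,f$ we have $ee^{-1} = e$ and $ff^{-1} = f$. Moreover $ef^{-1} = ef = 1_{\alpha\beta} \in N$ automatically. So $(e,f) \in \rho$ if and only if $(e,f) \in \tau$.
\item The kernel is $N_\alpha = \{x \in G_\alpha : (x,1_\alpha) \in \rho\}$. For $x \in G_\alpha$ we have $xx^{-1} = 1_\alpha = 1_\alpha 1_\alpha^{-1}$, and $(1_\alpha,1_\alpha) \in \tau$. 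So the only remaining condition is $x1_\alpha^{-1} = x \in N$, that is, $x \in N_\alpha$.
\end{enumerate}
Hence distinct pairs $(N,\tau)$ give distinct relations $\rho_{N,\tau}$, and in particular the map is injective on congruence pairs.
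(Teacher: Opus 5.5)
Your proof is correct and follows the paper's own argument:
\begin{itemize}
\item reflexivity comes from reflexivity of $\tau$ and $1_\alpha \in N$;
\item the trace condition comes from $\tau$ being a subsemigroup of $E\times E$;
\item the kernel condition comes from normality of each $N_\alpha$ together with $N_\alpha\phi_{\alpha,\beta}\subseteq N_\beta$;
\item injectivity comes from recovering $\tau$ as $\rho\cap(E\times E)$ and $N_\alpha$ as $\{x\in G_\alpha : (x,1_\alpha)\in\rho\}$.
\end{itemize}
The only cosmetic difference is in the kernel step. You factor $xzt^{-1}y^{-1}=(xzt^{-1}x^{-1})(xy^{-1})$. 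The paper instead conjugates $xy^{-1}$ to $y^{-1}x$, multiplies by $zt^{-1}$ in $G_{\alpha\beta\gamma\delta}$, and conjugates back by $y$.
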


\begin{proof}
    Clearly $(x x^{-1} , x x^{-1}) \in \tau$ as $\tau$ is reflexive and $x x^{-1} \in N$ as $N$ contains the identity of each $G_\alpha$.
    Let $(x,y),(z,t) \in \rho_{N,\tau}$, with $x \in G_\alpha , y \in G_\beta , z \in G_\gamma$ and $t \in G_\delta$. 
    Then:
    \begin{align*}
        (xz z^{-1} x^{-1}, y t t^{-1} y^{-1}) & = (x 1_\gamma x^{-1} , y 1_\delta y^{-1}) \\
        & = (x x^{-1} \phi_{\alpha, \alpha \gamma} ,yy^{-1} \phi_{\beta,\beta \delta}) \\
        & = (1_{\alpha \gamma} , 1_{\beta \delta}) \\
        & = (1_\alpha, 1_{\beta})(1_{\gamma}, 1_\delta) \\
        & = (xx^{-1},yy^{-1})(zz^{-1},tt^{-1}) \in \tau.
    \end{align*}
    We have $xy^{-1},zt^{-1} \in N$, so $xy^{-1} \in N_{\alpha \beta}$ and $zt^{-1} \in N_{\gamma \delta}$.
    It follows that $y^{-1} x = (y \phi_{\beta , \alpha \beta})^{-1} xy^{-1} y \phi_{\beta , \alpha \beta} \in N_{\alpha \beta}$.
    As $\alpha \beta,\gamma \delta \geq \alpha \beta \gamma \delta$, we see that $y^{-1}x \phi_{\alpha \beta , \alpha \beta \gamma \delta} $ and $ zt^{-1} \phi_{\gamma \delta, \alpha \beta \gamma \delta}$ are both contained in $N_{\alpha \beta \delta \gamma}$.
    Hence $y^{-1} x z t^{-1} \in N_{\alpha \beta \delta \gamma}$.
    Finally conjugating by $y^{-1} \phi_{\beta , \alpha \beta \gamma \delta}$ gives $xz t^{-1}y^{-1} \in N_{\alpha \beta \gamma \delta} \subseteq N$.
    Hence $(xz,yt) \in \rho_{N,\tau}$ and $\rho_{N,\tau}$ is a diagonal subsemigroup.

    Now suppose that $(N,\tau)$ and $(N^\prime,\tau^\prime)$ are diagonal subsemigroup pairs with $\rho_{(N,\tau)} = \rho_{(N^\prime,\tau^\prime)}$.
    It is easy to see that $1_\alpha 1_\beta^{-1} = 1_{\alpha \beta} \in N , N^\prime$, so:
    \begin{equation*}
        (1_\alpha,1_\beta) \in \tau \iff \rho_{(N,\tau)} = \rho_{(N^\prime,\tau^\prime)} \iff (1_\alpha,1_\beta) \in \tau^\prime
    \end{equation*}
    and therefore $\tau = \tau^\prime$.
    To show that $N = N^\prime$ take $x \in G_\alpha$, then $(xx^{-1},1_\alpha 1_\alpha^{-1}) = (1_\alpha,1_\alpha) \in \tau$ and:
    \begin{equation*}
        x \in N \iff (x,1_\alpha) \in \rho_{(N,\tau)} = \rho_{(N^\prime,\tau)} \iff x \in N^\prime. \qedhere
    \end{equation*}
\end{proof}
It is worth noting that this family of diagonal subsemigroups is far from describing all of the diagonal subsemigroups on a Clifford semigroup.
For example let $G$ be a group with identity $e$ and adjoin a new identity $1$ to give $G^1$.
If $H$ is any proper subgroup of $G$ then it is easily seen that $\{(1,1) \} \cup G \times G \cup \{1\} \times H$ is a diagonal subsemigroup of $G^{1}$ that is not of the form $\rho_{N,\tau}$

It follows from Lemma \ref{clifford diagonal subsemigroup family} that there is at least $K |\Diag(Y)|$ diagonal subsemigroups on $S$.
And hence we have:
\begin{equation*}
    \chi(S) = \frac{|\Cong(S)|}{|\Diag(S)|} \leq \frac{K|\Cong(Y)|}{K|\Diag(Y)|} = \chi(Y).
\end{equation*}

\begin{thm}
    \label{Clifford Ratio Result}
    Let $S = \mathcal{S}[Y;G_\alpha ; \phi_{\alpha,\beta}]$ be a finite Clifford semigroup.
    Then ${0 < \chi(S) \leq \chi(Y)}$. \qed
\end{thm}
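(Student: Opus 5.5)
The lower bound $0<\chi(S)$ is immediate: $S$ is finite, so $\Cong(S)$ and $\Diag(S)$ are finite, and the diagonal $\Delta_S$ is a congruence, so $|\Cong(S)|\ge 1$. The content is the upper bound $\chi(S)\le\chi(Y)$. The plan is to bound $|\Cong(S)|$ from above and $|\Diag(S)|$ from below by quantities sharing the common factor $K$, the number of kernels, and then cancel.

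\emph{Upper bound on congruences.} I would use the congruence-pair description recalled above from \cite[Section 5.3]{Ho95}. Every congruence on $S$ equals $\rho_{N,\tau}$ for some congruence pair $(N,\tau)$. Since $\rho_{N,\tau}$ is determined by $(N,\tau)$, the map from congruence pairs onto $\Cong(S)$ is surjective. A congruence pair consists of a kernel together with a congruence on $E\cong Y$, subject to the extra condition \eqref{lab5}. Hence the number of congruence pairs is at most $K\,|\Cong(Y)|$, and so $|\Cong(S)|\le K\,|\Cong(Y)|$.

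\emph{Lower bound on diagonal subsemigroups.} Here I would invoke Lemma~\ref{clifford diagonal subsemigroup family}. Its proof of injectivity uses only that $N$ is a kernel and $\tau$ is a reflexive compatible relation on $E$; condition \eqref{lab5} is never used. So the map $(N,\tau)\mapsto\rho_{N,\tau}$ is injective on the whole set of diagonal subsemigroup pairs, and the lemma also shows its image lies in $\Diag(S)$. Diagonal subsemigroup pairs are exactly the pairs (kernel, element of $\Diag(E)$), with no linking condition, and $\Diag(E)$ corresponds to $\Diag(Y)$ under the isomorphism $E\cong Y$. There are therefore exactly $K\,|\Diag(Y)|$ such pairs, giving $|\Diag(S)|\ge K\,|\Diag(Y)|$.

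Combining the two bounds, with $K\ge1$ since the trivial kernel $\bigcup_\alpha\{1_\alpha\}$ exists, gives
\[
\chi(S)=\frac{|\Cong(S)|}{|\Diag(S)|}\le \frac{K|\Cong(Y)|}{K|\Diag(Y)|}=\chi(Y).
\]
The one point to check carefully is that the injectivity argument in Lemma~\ref{clifford diagonal subsemigroup family} really applies to all diagonal subsemigroup pairs, not just congruence pairs as the lemma's wording says. Rereading its proof, it recovers $\tau$ from the pairs $(1_\alpha,1_\beta)$ and recovers $N$ from the pairs $(x,1_\alpha)$, and neither step uses \eqref{lab5}. So this check should go through without difficulty.
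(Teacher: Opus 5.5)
Your proposal is correct and follows the paper's argument exactly. You bound $|\Cong(S)|$ above by $K|\Cong(Y)|$ via congruence pairs, bound $|\Diag(S)|$ below by $K|\Diag(Y)|$ via the injective family $\rho_{N,\tau}$, and cancel $K$; you also rightly check that injectivity holds for all diagonal subsemigroup pairs and not only for congruence pairs as the lemma is worded, which is what the paper's count implicitly relies on.
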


The example of a group with an identity adjoined discussed above shows that the second equality can be strict.

It is perhaps interesting to compare Theorem \ref{Clifford Ratio Result} with Theorem \ref{main theorem section 3} for Rees matrix semigroups.
Both theorems exhibit a subinterval of $(0,1]$ which necessarily contains $\chi(S)$. 
However, in Theorem \ref{main theorem section 3}, as the index sets $I,\Lambda$ become larger this interval rapidly converges towards $(0,1]$.
We do not know what the possible values of $\chi(Y)$ are for a semilattice $Y$, but examples seem to suggest that it converges to $0$ as $|Y|$ increases, yielding very narrow intervals for $\chi(S)$. 

We also note that distinguished homomorphic images of $S$ play a prominent role in both inequalities.
Indeed the semilattice $Y$ is a natural homomorphic image of $S$ in the Clifford case, and $\chi(Y)$ is an upper bound for $\chi(S)$.
By way of contrast, when $S= \RM$ is a Rees matrix semigroup with $|I| = a , |\Lambda| = b$, the $I \times \Lambda$ rectangular band is a homomorphic image of $S$.
As we observed earlier its DSC coefficient is $\frac{B(a)B(b)}{2^{a^2 - a}2^{b^2 - b}}$ which is precisely the lower bound for $\chi(S)$ as obtained in Theorem \ref{main theorem section 3}.
\bigskip

\textbf{Data Availability Statement.} There is no additional data associated with this article.

\bibliography{Ratio}

@book {Ho95,
    AUTHOR = {Howie, John M.},
     TITLE = {Fundamentals of semigroup theory},
    SERIES = {London Mathematical Society Monographs. New Series},
    VOLUME = {12},
      NOTE = {Oxford Science Publications},
 PUBLISHER = {The Clarendon Press, Oxford University Press, New York},
      YEAR = {1995},
     PAGES = {x+351},
      ISBN = {0-19-851194-9},
   MRCLASS = {20Mxx (20-02)},
  MRNUMBER = {1455373},
MRREVIEWER = {P.\ M.\ Higgins},
}

@article {Su28,
    AUTHOR = {Suschkewitsch, Anton},
     TITLE = {\"Uber die endlichen {G}ruppen ohne das {G}esetz der
              eindeutigen {U}mkehrbarkeit},
   JOURNAL = {Math. Ann.},
  FJOURNAL = {Mathematische Annalen},
    VOLUME = {99},
      YEAR = {1928},
    NUMBER = {1},
     PAGES = {30--50},
      ISSN = {0025-5831,1432-1807},
   MRCLASS = {99-04},
  MRNUMBER = {1512437},
       DOI = {10.1007/BF01459084},
       URL = {https://doi.org/10.1007/BF01459084},
}

@article{BARBER_2025,
   title={SEMIGROUP CONGRUENCES AND SUBSEMIGROUPS OF THE DIRECT SQUARE},
   ISSN={1755-1633},
   url={http://dx.doi.org/10.1017/S0004972725100166},
   DOI={10.1017/s0004972725100166},
   journal={Bulletin of the Australian Mathematical Society},
   publisher={Cambridge University Press (CUP)},
   author={Barber, CALLUM and Ru\v{s}kuc, NIK},
   year={2025},
   month=jul, pages={1–12} }

@article{Preston_1961,
author = {Preston, G. B.},
title = {Congruences on Completely 0-Simple Semigroups},
journal = {Proceedings of the London Mathematical Society},
volume = {s3-11},
number = {1},
pages = {557-576},
doi = {https://doi.org/10.1112/plms/s3-11.1.557},
url = {https://londmathsoc.onlinelibrary.wiley.com/doi/abs/10.1112/plms/s3-11.1.557},
eprint = {https://londmathsoc.onlinelibrary.wiley.com/doi/pdf/10.1112/plms/s3-11.1.557},
year = {1961}
}

@book{kapp1969completely,
  title={Completely O-simple Semigroups: An Abstract Treatment of the Lattice of Congruences},
  author={Kapp, K.M. and Schneider, H.},
  isbn={9780805352122},
  lccn={69017032},
  series={99-0108427-X},
  url={https://books.google.co.uk/books?id=QhlCAAAAIAAJ},
  year={1969},
  publisher={W. A. Benjamin}
}

@article{Clifford1941,
 ISSN = {0003486X, 19398980},
 URL = {http://www.jstor.org/stable/1968781},
 author = {A. H. Clifford},
 journal = {Annals of Mathematics},
 number = {4},
 pages = {1037--1049},
 publisher = {[Annals of Mathematics, Trustees of Princeton University on Behalf of the Annals of Mathematics, Mathematics Department, Princeton University]},
 title = {Semigroups Admitting Relative Inverses},
 urldate = {2025-12-08},
 volume = {42},
 year = {1941}
}

@article {MR390093,
    AUTHOR = {Green, David G.},
     TITLE = {The lattice of congruences on an inverse semigroup},
   JOURNAL = {Pacific J. Math.},
  FJOURNAL = {Pacific Journal of Mathematics},
    VOLUME = {57},
      YEAR = {1975},
    NUMBER = {1},
     PAGES = {141--152},
      ISSN = {0030-8730,1945-5844},
   MRCLASS = {20M10},
  MRNUMBER = {390093},
MRREVIEWER = {T.\ E.\ Hall},
       URL = {http://projecteuclid.org/euclid.pjm/1102906180},
}

@article{Scheiblich_1974, title={Kernels of inverse semigroup homomorphisms}, volume={18}, DOI={10.1017/S1446788700022862}, number={3}, journal={Journal of the Australian Mathematical Society}, author={Scheiblich, H. E.}, year={1974}, pages={289–292}}

@article{PETRICH1978231,
title = {Congruences on inverse semigroups},
journal = {Journal of Algebra},
volume = {55},
number = {2},
pages = {231-256},
year = {1978},
issn = {0021-8693},
doi = {https://doi.org/10.1016/0021-8693(78)90219-3},
url = {https://www.sciencedirect.com/science/article/pii/0021869378902193},
author = {Mario Petrich}
}
\bibliographystyle{abbrv}

\end{document}